\pdfoutput=1

\documentclass{article}

\usepackage{graphicx}
\usepackage{amsmath}
\usepackage{mystuff_new}
\RequirePackage[capitalize,sort&compress]{cleveref}
\crefname{assumption}{Assumption}{Assumptions}
\crefname{corollary}{Corollary}{Corollaries}
\usepackage[cmyk]{xcolor}
\crefname{assumption}{Assumption}{Assumptions}
\crefname{corollary}{Corollary}{Corollaries}

\newcommand\Ito{It\^o\xspace}
\newcommand\EM{Euler--Maruyama\xspace}
\usepackage{enumerate}
\usepackage{eucal}
\delimstyle{3pt}{850}
\def\method{\vec S}    
                             
\def\sdeconst{L_{\operatorname{SDE}}}
\newcommand{\shat}{\skew{2}\hat{s}}
\newcommand{\rhat}{\skew{2}\hat{r}}
\newcommand{\uhat}{\skew{2}\hat{u}}
\newcommand{\yhat}{\skew{1}\hat{y}}
\newcommand{\Cb}{\mathrm{C}_\mathrm{b}}
\newcommand{\Kbar}{\bar{K}}
\newcommand{\Ybar}{\bar{Y}}
\newcommand{\lbar}[1]{\mkern
  2mu\overline{\mkern-2mu#1\mkern-2mu}\mkern 2mu}
\renewcommand{\Kbar}{\skew{3}\lbar{{K}}}
\renewcommand{\Ybar}{\skew{1}\lbar{{Y}}}
\newcommand{\probs}{\mathbb{P}}
\newcommand\sdeconstd{K_{\operatorname{SDE}}}
\newcommand\newxconst{K_{\operatorname{TES}}}
\newcommand\xconst{K_{\operatorname{TE}}}
\newcommand\fixedconst{K_{\operatorname{F}}}
\newcommand\adapconst{K_{\operatorname{A}}}
\newcommand\KBI{K_{\operatorname{BI}}}

\crefname{ineq}{}{}
\creflabelformat{ineq}{(#2#1#3)}
\renewcommand{\vec}[1]{{#1}} 
\begin{document}

\title{On the pathwise approximation 
  of stochastic differential equations
}

\author{ Tony Shardlow\footnote{University of Bath \texttt{t.shardlow@bath.ac.uk}}\and%
   Phillip Taylor }
 
\maketitle

\begin{abstract}
We consider one-step methods for integrating stochastic
    differential equations and prove pathwise convergence using
    ideas from rough path theory. In contrast to alternative
    theories of pathwise convergence, no knowledge is required of
    convergence in $p$th mean and the analysis starts from a
    pathwise bound on the sum of the truncation errors. We show
    how the theory is applied to the Euler--Maruyama method with
    fixed and adaptive time-stepping strategies. The assumption
    on the truncation errors suggests an error-control strategy
    and we implement this as an adaptive time-stepping
    Euler--Maruyama method using bounded diffusions. We prove the
    adaptive method converges and show some computational
    experiments.
\end{abstract}

\section{Introduction}

Let $(\Omega, \mathcal{F}, \mathcal{F}_t, \probs)$ be a filtered
probability space and consider independent
$\mathcal{F}_t$-Brownian motions $W^j(t)$ for $j=1,\dots,m$.  We
study the following \Ito stochastic differential equation (SDE)
in $\real^d$:
\begin{equation}
  \label{eq:sode}
  d\vec y(t)%
  = \vec g_0(\vec y(t))\,dt%
  + \sum_{j=1}^m \vec g_j(\vec y(t))\,dW^j(t)
\end{equation}
where $\vec g_j\colon\real^d\to\real^d$ for
$j=0,\dots,m$. We assume that $\vec g_j$ are sufficiently
regular and there exists a stochastic process $\vec y(t)$
that satisfies this equation on a time interval $[0,T]$ and,
if an initial condition is specified, the solution is unique
(in the pathwise sense).  We denote by $\vec y(t; s, \vec
z)$ the solution of~\cref{eq:sode} for $t\in [s,T]$ with
initial condition $\vec y(s) = \vec z\in\real^d$.  In
general, exact solutions $\vec y(t)$ are not known and
numerical integrators are required to determine quantities
of interest, such as averages, sample paths, or exit
times. In this paper, we look at one-step methods for
approximating sample paths of $\vec y(t)$ and analyse the
pathwise error using techniques from rough path theory
\citep{MR2387018,friz_multidimensional_2010}.  In dynamical
system, we are often interested in how sample paths of SDEs
change with model parameters and it is important to compute
sample paths reliably.  The main result is
\Cref{thm:global_alt}. It gives pathwise convergence of the
one-step method at a polynomial rate subject to a regularity
condition on the sample paths (\Cref{ass:4.1,ass:4.2}) and a
bound on the sum of the truncation errors
(\Cref{ass:newx}). As well as identifying the rate of
convergence in terms of the bound on the truncation-error
sum, we identify the constant explicitly in terms of those
appearing in the
assumptions.

 Pathwise error analysis
is normally performed \citep{MR1625576,MR2320830} by
showing the $p$th mean error converges at a polynomial rate and applying the
Borel--Cantelli lemma. \cref{thm:global_alt} predicts the same
rates of convergence, for example, for the fixed time-stepping
Euler--Maruyama or Milstein method. However,  it does not
use $p$th mean error estimates.

We take particular interest in the so called
\emph{bounded-diffusion} time-stepping strategy
of~\citep{MR1722281}. Instead of taking uniformly spaced
times and sampling the Brownian increments from the Gaussian
distribution, we choose random times such that the Brownian
increment is bounded. Specifically, we define a cuboid
$[0,a_0]\times[-a_1,a_1]\times\dots\times[-a_m,a_m]$ and
choose the first exit time $\tau$ of the process $(t,
W^1(t),\dots,W^m(t))$ from the cuboid. This defines a
stopping time $\tau$ and associated exit points $W^i(\tau)$
that can be used for the time step and Brownian increments
in a numerical integrator for \cref{eq:sode}. We will use
the convergence criterion developed for
\Cref{thm:global_alt} to choose $a_i$ adaptively and thereby
implement an error-control strategy for the Euler--Maruyama
method.  The adaptivity leads to an improvement in the
constant in the theoretical error bound, compared to fixed
time-stepping. The constant depends on the inherent
exponential divergence of sample paths of the SDE with
different initial data (see~\cref{ass:4.2}) and the constant
in the local truncation error (see~\cref{ass:newx}). The
adaptive %
strategy is able to control the second source of error, not
the %
first.


The paper is organised as follows. \cref{sec:bg} gives background
on the time-stepping methods of interest. Working pathwise from
the start, \cref{sec:anal} provides the statement and proof of
the main result \Cref{thm:global_alt}.  In \cref{sec:em}, we give
preliminary lemmas that provide pathwise bounds on the sum of the
truncation errors, which help in establishing \cref{ass:newx},
and show that the pathwise-convergence theorem applies to the
Euler--Maruyama method with fixed time-steps. In \cref{sec:bd},
we introduce two adaptive time-stepping strategies based on
bounded diffusions and present convergence theory and numerical
experiments. An appendix reviews some useful results.

\section{Background}%
\label{sec:bg}      %

Our pathwise convergence theory applies to one-step methods
for \cref{eq:sode} in the case of variable and random time-steps. We work on the time interval $[0,T]$ and consider partitions
$\mathcal{T}$ of $[0,T]$.
 
\begin{definition}[partitions] Let $\mathcal{T}$ denote the set
  of partitions $0=\tau_0<\tau_{1}<\dots<\tau_{N}=T$ consisting
  of $[0,T]$-valued random variables.  Let
  $\mathcal{T}_{\operatorname{stop}}$ be the subset of
  $\mathcal{T}$ consisting of stopping times (i.e., if
  $(\tau_0,\dots, \tau_{N})\in \mathcal{T}_{\operatorname{stop}}$
  then $\tau_j$ is $\mathcal{F}_{\tau_j}$-measurable for
  $j=0,\dots,N$). Let $\shat=\tau_n$ if $\tau_n\le
  s<\tau_{n+1}$.
\end{definition}
We generate approximations $\vec y_k=S_{0k}(z)$ to $\vec
y(\tau_k;\tau_0,\vec z)$ at times $\tau_k$ in a partition
$(\tau_0,\dots,\tau_n)\in \mathcal{T}$ using one-step methods.

\begin{definition}[one-step method]   
  Given $\tau_n<\tau_{n+1}$, a \emph{one-step method} $\vec
  S_{n,n+1}$ is a map from $\real^d$ to the set of
  $\real^d$-valued random variables. For a given partition
  $\mathcal{T}$ and $0\le k<n\le N$, we use the notation
  $\method((\tau_k,\dots,\tau_n),\vec z)$ or the
  abbreviation $\method_{kn}(\vec z)$ to denote $\vec
  S_{n-1,n}\circ\dots \circ\vec S_{k,k+1}(\vec z)$, the
  action of applying the one-step method successively over
  the time steps
  $[\tau_k,\tau_{k+1}],\dots,[\tau_{n-1},\tau_n]$.
\end{definition}
The simplest useful example is the Euler--Maruyama  method with fixed
time-step $h$ given by $\method_{n,n+1}(\vec y_n)=\vec y_{n+1}$ for 
 times $\tau_n=\tau_0+n h$ and
 \begin{equation}
   \label{eq:em}
   \vec y_{n+1}%
   =\vec y_n%
   + \vec g_0(\vec y_n)(\tau_{n+1}-\tau_n)%
   + \sum_{j=1}^m \vec g_j(\vec y_n) \pp{W^j(\tau_{n+1})-W^j(\tau_n)}.
 \end{equation}  The implicit {\EM} method, given by
 \begin{equation}
  \label{eq:emi}
  \vec y_{n+1}%
  =\vec y_n%
  + \vec g_0(\vec y_{n+1})(\tau_{n+1}-\tau_n)%
  + \sum_{j=1}^m \vec g_j(\vec y_n) \pp{W^j(\tau_{n+1}) -W^j(\tau_n)},
\end{equation}
is included if the nonlinear equations can be solved for any
$y_n$ to define $y_{n+1}$ uniquely. 
We will introduce an example of random times $\tau_n$ in
\cref{sec:bd}.

Key to the analysis of convergence  of one-step methods is the
local truncation error.         %

\begin{definition}[local truncation error]%
  \label[definition]{def:lte}%
  For $\tau_n<\tau_{n+1}$, the \emph{local truncation error} at $\vec
  z\in\real^d$ of a one-step method $S_{n,n+1}(z)$ is
  \begin{equation}%
    \label{eq:lte}
    \vec  \delta(\tau_n, \tau_{n+1},\vec z)%
     \coloneq    \vec y(\tau_{n+1};\tau_n,\vec z)%
    -\method_{n,n+1}(\vec z).%
  \end{equation}
\end{definition} 
For the {\EM} method,
  writing $dW^0(t)=dt$, the local truncation error is
\begin{gather}%
  \begin{split}\label{eq:lte_em}
    \delta(\tau_k, \tau_{k+1}, z)%
    =&\sum_{i,j=0}^m \int_{\tau_k}^{\tau_{k+1}}%
    \int_{\tau_k}^s%
    q_{ij}\pp{y(r;\tau_k,z)} \,dW^i(r) \,dW^j(s),
  \end{split}%
\end{gather}
where
\begin{equation}
  q_{ij}(y)%
   \coloneq   \begin{cases}\displaystyle
    Dg_j(y) g_0(y)%
    +\frac 12 \sum_{k=1}^m D^2 g_j(y) (g_k(y), g_k(y)),& i=0,\\
    Dg_j(y) g_i(y),& i\ne 0.
  \end{cases}\label{eq:q}
\end{equation}
See for example \citep{kloeden:1992,milstein:1995}.
Under regularity assumptions on $g_j$, we can estimate the $p$th
moment by using the Burkholder--Davis--Gundy inequality and find
$\mean{\norm{\delta}^p}=\order{h^{p} }$ for $p\ge 2$ and then
show that the local truncation error $\delta(\tau_k,\tau_{k+1},z)=\order{h^{\gamma+1/2-\epsilon}}$
with $\gamma=1/2$ and $\epsilon>0$. 
  Note the use of $\epsilon$ in writing the condition on the
  truncation error, similar to saying Brownian motion is
  H\"older continuous with exponent %
  $1/2-\epsilon$, any $\epsilon>0$.   
We make the following assumption on the truncation error.

\begin{assumption}\label[assumption]{ass:bluenew}
  For a partition  $(\tau_0,\dots,\tau_N)\in \mathcal{T}$,
an  $\epsilon\in(0,1)$, and a random variable $\xconst>0$, it holds that
  \begin{equation}
    \label[ineq]{eq:3} 
    \norm{\vec \delta(\tau_k, \tau_{k+1}, \vec z)}%
    \le \xconst |\tau_{k+1}-\tau_k|^{\gamma+1/2-\epsilon}, %
    \qquad \text{ $\forall\vec z\in\real^d$, $k=0,\dots,N-1$.}
  \end{equation}
  
  \end{assumption}

In general, we want to refine the partition $\mathcal{T}$
and keep $\xconst$ and $\epsilon$ constant in \cref{eq:3},
in which case $\gamma$ indicates the pathwise order of
convergence  with respect to the mesh width
$h \coloneq \max_k|\tau_{k+1}-\tau_k|$ (as we show in
\cref{thm:global_alt}).  We will denote such methods by
$\vec S^\gamma_{kn}$.
Before \cref{thm:global_alt}, we review a simpler result
from~\citep{MR1470933} that gives order $\gamma-1/2$
convergence. We assume the following continuity with respect to
initial data property, which is satisfied by $y(t;s,z)$ if
$g_j\in \Cb^3(\real^d,\real^d)$ \citep[Theorem
10.26]{friz_multidimensional_2010}. Here,
$C_{\mathrm b}^r(\real^d,\real^d)$ is the set of functions $g\colon
\real^d\to \real^d$ with $r\in\naturals$ uniformly bounded and
continuous derivatives and norm
$\norm{g}_{C_{\mathrm b}^r}\coloneq\sup_{|\alpha|\le r}\sup_{\vec x\in
  \real^d}\norm{D^\alpha g(\vec x)}$.

\begin{assumption}%
  \label[assumption]{ass:4.1}%
  For a random variable $\sdeconst>0$,
  \begin{equation}\label[ineq]{eq:boom}
  \norm{ \vec y(t; s, \vec z^1)-\vec y(t; s, \vec z^2) } %
  \le \sdeconst\norm{\vec z^1-\vec z^2}
  \end{equation}
for  $0\le s\le t\le T$ and $\vec z^1,\vec z^2\in\real^d$.
\end{assumption}
We define $h \coloneq \max_k|\tau_{k+1}-\tau_k|$ and will use $h$ to
measure rates of convergence.

\begin{theorem}[convergence rate $\gamma-1/2$]
  \label[theorem]{thm:global1}%
  Let \cref{ass:4.1} hold for \cref{eq:sode} and
  \cref{ass:bluenew} hold for $\vec S^\gamma_{kn}$. Then,
  for any  $\vec y_0\in\real^d$,
  \[
  \max_{n=0,\dots,N}
  \norm{\method^\gamma_{0n}(\vec y_0)-\vec y(\tau_n;0,\vec y_0)}%
  \le \sdeconst \,\xconst \,T\, h ^{\gamma-\epsilon-1/2},
  \]
  where $\epsilon$, $\xconst$, and $\sdeconst$ are given in
  \cref{eq:boom,eq:3}.
\end{theorem}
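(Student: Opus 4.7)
The plan is to run a Lady Windermere's fan telescoping argument: write the global error as a sum of one-step perturbations of the true flow, apply \cref{ass:4.1} to propagate each perturbation forward, apply \cref{ass:bluenew} to bound each perturbation by the local truncation error, then collapse the sum over step lengths back to $T$.

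More concretely, set $\vec Y_k \coloneq \method^\gamma_{0k}(\vec y_0)$, so that $\vec Y_0=\vec y_0$ and the global error at step $n$ is $\vec y(\tau_n;0,\vec y_0)-\vec Y_n = \vec y(\tau_n;\tau_0,\vec Y_0)-\vec y(\tau_n;\tau_n,\vec Y_n)$. Telescoping over $k=0,\dots,n-1$ gives
\begin{equation*}
  \vec y(\tau_n;0,\vec y_0)-\vec Y_n
  = \sum_{k=0}^{n-1}\bigl[\vec y(\tau_n;\tau_k,\vec Y_k)-\vec y(\tau_n;\tau_{k+1},\vec Y_{k+1})\bigr].
\end{equation*}
Using the flow property $\vec y(\tau_n;\tau_k,\vec Y_k)=\vec y(\tau_n;\tau_{k+1},\vec y(\tau_{k+1};\tau_k,\vec Y_k))$ and \cref{ass:4.1}, each summand is bounded by $\sdeconst\,\norm{\vec y(\tau_{k+1};\tau_k,\vec Y_k)-\vec Y_{k+1}}$. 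The quantity inside the norm is exactly the local truncation error $\vec\delta(\tau_k,\tau_{k+1},\vec Y_k)$ from \cref{def:lte}, and \cref{ass:bluenew} controls it by $\xconst|\tau_{k+1}-\tau_k|^{\gamma+1/2-\epsilon}$.

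The small trick that yields the stated rate is to split the exponent as $|\tau_{k+1}-\tau_k|^{\gamma+1/2-\epsilon} \le h^{\gamma-1/2-\epsilon}\,(\tau_{k+1}-\tau_k)$ (valid because $\gamma+1/2-\epsilon\ge 0$ in the regime of interest and $|\tau_{k+1}-\tau_k|\le h$), so that summing gives a telescope $\sum_k(\tau_{k+1}-\tau_k)\le T$. Taking the maximum over $n$ then delivers the bound $\sdeconst\,\xconst\,T\,h^{\gamma-\epsilon-1/2}$.

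There is no real obstacle here: once the flow property and \cref{ass:4.1} reduce global error to accumulated local error, and \cref{ass:bluenew} is applied, the only thing to watch is the exponent-splitting step, which is why the resulting rate is $\gamma-1/2$ rather than $\gamma+1/2$. The rest is bookkeeping and needs no stochastic input beyond what \cref{ass:bluenew} already encodes.
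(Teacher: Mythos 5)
Your proof is correct and follows essentially the same Lady Windermere's fan argument as the paper: telescope the global error into one-step perturbations of the flow, propagate each forward with \cref{ass:4.1} via the flow property, bound it by the local truncation error via \cref{ass:bluenew}, and collapse the sum by splitting the exponent. One small slip in the justification of the split: $|\tau_{k+1}-\tau_k|^{\gamma+1/2-\epsilon}\le(\tau_{k+1}-\tau_k)\,h^{\gamma-1/2-\epsilon}$ requires $\gamma-1/2-\epsilon\ge 0$ (so that $x\mapsto x^{\gamma-1/2-\epsilon}$ is nondecreasing), not $\gamma+1/2-\epsilon\ge 0$ as you wrote; since the stated rate is useful only in that regime the slip is harmless, but that is the condition actually being used.
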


\begin{proof} Let ${\vec y}_n \coloneq 
  \method_{0n}^\gamma(\vec y_0)$ and let $\vec
  y^k(t) \coloneq \vec y(t; \tau_k, {\vec y}_k)$ for
  $t\in[\tau_k,T]$. Then,
  \begin{align*}
    \norm{\vec y^k(\tau_{k+1})-{\vec y}_{k+1}}%
    = \norm{\vec y(\tau_{k+1}; \tau_k, {\vec y}_k)%
      - \method^\gamma_{k,k+1}({\vec y}_k)}.
  \end{align*}
  By \cref{def:lte}, this is bounded by
  $\xconst|\tau_{k+1}-\tau_k|^{\gamma+1/2-\epsilon}$. By
  \cref{ass:4.1},
  \begin{align*}
    \norm{\vec y^k(\tau_n)-\vec y^{k+1}(\tau_n)}%
    &\le\sdeconst\norm{\vec y^k(\tau_{k+1})-\vec
      y^{k+1}(\tau_{k+1})}%
    = \sdeconst\norm{\vec y^k(\tau_{k+1})-{\vec y}_{k+1}} \\
    &\le
    \sdeconst\pp{\xconst|\tau_{k+1}-\tau_k|^{\gamma-\epsilon+1/2}}.
  \end{align*}
  If we now sum over $k=0,\dots,n-1$, the triangle inequality applies
  and we obtain
  \[
  \norm{\vec y(\tau_n; 0, \vec y_0)-\method^\gamma_{0n}(\vec y_0)}%
  = \norm{\vec y^0(\tau_n)-\vec y^n(\tau_n)}%
  \leq \sdeconst\, \xconst\sum_{k=0}^{n-1}
  \abs{\tau_{k+1}-\tau_k}^{\gamma-\epsilon+1/2},
  \]
  since $\vec y^0(\tau_n) =\vec y(\tau_n; 0, \vec y_0)$ and $\vec
  y^n(\tau_n) = \method^\gamma_{0n}(\vec y_0)$. Finally, $\tau_n\le
  T$, so that
  \[
  \norm{\vec y(\tau_n; 0,\vec y_0)-\method^\gamma_{0n}(\vec y_0)}%
  \leq \sdeconst \,\xconst \,T\,h^{\gamma-\epsilon-1/2}.
\]
\end{proof}
Suppose that $\mathcal{T}_h$ is a family of partitions with
$h=\max_k\abs{\tau_{k+1}-\tau_k}$ for
$(\tau_0,\dots,\tau_N)\in \mathcal{T}_h$.  When
\cref{ass:4.1} holds with the same $\xconst$ for each
$\mathcal{T}_h$, this theorem provides convergence of the
numerical approximation $S^\gamma_{0n}(y_0)$ on the
partition $\mathcal{T}_h$ to the true solution in the
pathwise sense in the limit $h\to 0$ and the pathwise error
is $\order{h^{\gamma-\epsilon-1/2}}$, for $\epsilon>0$. The
conditions are general (e.g., allowing time steps that are
not uniformly spaced or are not stopping times). However the
theorem does not imply convergence of the {\EM} method and
is not optimal for the Milstein method where the error is
$\order{h^{1-\epsilon}}$ not $\order{h^{1/2-\epsilon}}$ for
fixed time-steps.

\section{Main result on pathwise convergence}%
\label{sec:anal}%
To achieve a rate of convergence higher than the rate in
\cref{thm:global1}, we introduce two more assumptions. The first like
\cref{ass:4.1} concerns the solution of the SDE itself. Again, the
condition holds if $g_j\in \Cb^3(\real^d,\real^d)$ (see \cref{lemma:quote_me}).

\begin{assumption}%
  \label[assumption]{ass:4.2}%
  For an $0<\epsilon<1/2$ and a random variable
  $\sdeconstd>0$,
  \begin{align}
    \norm{\pp{\vec y(t; s, \vec z^1)-\vec z^1}%
      - \pp{\vec y(t; s,\vec z^2)-\vec z^2}}%
    &\le \sdeconstd(t-s)^{1/2-\epsilon/3} \norm{\vec z^1-\vec z^2},%
    \label[ineq]{eq:lte2}
  \end{align}
for all $0\le s<t\le T$ and  $\vec z^1,\vec z^2\in\real^d$.
\end{assumption}

The second assumption is a pathwise version of the
independent-increment property of Brownian motion and sets a
 bound on the sum of truncation errors along each path. We will
verify this assumption in the case $(\tau_0,\dots,\tau_N)\in
{\cal T}_{\operatorname{stop}}\subset {\cal T}$, so that the
$\tau_j$ are stopping times. Indeed, \citep{MR1470933}
provides an example of a time-stepping strategy where the
{\EM} method fails when $\tau_j$ are not adapted. 
 Consider an integrator $\vec S^\gamma_{kn}$ with respect to
  $(\tau_0,\dots,\tau_{N})\in \mathcal{T}$ and recall
  $h \coloneq \max_{k=0,\dots,N-1}|\tau_{k+1}-\tau_k|$. 
\begin{assumption}[truncation-error sum]
  \label[assumption]{ass:newx}%
     For an initial
  condition $y_0\in\real^d$, an $\epsilon\in (0,1/2)$, and
  random variable $\newxconst>0$, it holds that
  \begin{equation}
    \label[ineq]{eq:2x}
    \norm{\vec X_{kn}(\vec y_0)}%
    \le \newxconst (\tau_n-\tau_k)^{(1+\epsilon)/2}
    h^{\gamma-\epsilon},\quad%
0\le k<n\le N,
  \end{equation}
  where $\vec X_{kn}(y_0) \coloneq \sum_{j=k}^{n-1} \vec \delta(\tau_j,
  \tau_{j+1}, \vec y_j)$ and $\vec y_n=\method_{0n}^\gamma(\vec
  y_0)$. 
\end{assumption}
In general, we want to refine the partition
  $\mathcal{T}$ and take limits as $h\downarrow 0$ while
  keeping  $\newxconst$  and $\epsilon$ constant. 

For  the {\EM} method~\eqref{eq:em} 
with fixed time-step $\tau_{n+1}-\tau_n=h$,
\cref{eq:2x} follows because 
\[
X_{kn}(y_0)%
=X(\tau_k,\tau_n),\quad%
\text{ for }%
X(r,t)%
 \coloneq \sum_{i,j=0}^m\int_{r}^{t} \int_{\shat}^{ s} %
q_{ij}\pp{ y(r; \rhat, \yhat(r))} \,dW^i(r) \,dW^j(s),
\] 
by writing \cref{eq:lte_em,eq:q} with the notation
$\yhat(r) \coloneq y_n$ for $\tau_n\le r <\tau_{n+1}$. Then, if
the $q_{ij}$ are well-behaved, we can show that
$\mean{\norm{X(r,t)}^p}=\order{h^{p/2} (t-r)^{p/2}}$ and we
expect that $\norm{X_{kn}(y_0)}\le \newxconst
(\tau_n-\tau_k)^{(1/2-\epsilon/4)} h^{1/2-\epsilon/4}$ for some
$\newxconst>0$ independent of $h$.  A further manipulation using
$|\tau_n-\tau_k|\ge h$ then yields~\cref{eq:2x} with
$\gamma=1/2$. Thus, there are two steps to
verifying~\cref{eq:2x}: first, derive pathwise estimates of
certain stochastic integrals and, second, show that the time step
$\tau_{j+1}-\tau_j$ is not too small relative to $h$. We verify
\cref{ass:newx} for the {\EM} method with different time-stepping
methods in the proofs of \cref{thm:full} (fixed time-stepping)
and \cref{thm:ada,thm:adb} (adaptive bounded diffusions).

\subsection{Preliminary lemma}
Before giving the main convergence result in
\cref{thm:global_alt}, we give two lemmas required for its
proof. The following result plays the role normally assumed
by Gronwall's inequality in proving \cref{thm:global_alt}:
the local truncation error will determine $C_2$ and the
$J_{kn}$ is one of two terms that control the global error
$y_n-y(\tau_n;\tau_k,y_k)$. Then, \eqref{eq:ass_Jnm2}
describes how the local truncation error affects the global
error. We derive~\eqref{eq:7}, which shows $J_{kn}$ is
proportional to $L$ and hence $C_2$. That is, roughly, the
local truncation error $C_2$ controls the global error
$J_{kn}$. The result is adapted from~\citep{MR2387018}.
\begin{lemma}\label[lemma]{lemma:pullout}
  For a sequence $0=t_0<t_1<t_2<\cdots$, consider a set of vectors
  $\vec J_{kn}$ indexed by $k,n\in\naturals$. For constants
  $C_1,C_2,\gamma>0$, $\alpha>1$,
  suppose that %
\begin{equation}%
  \label[ineq]{eq:ass_Jnm2}
  \norm{\vec J_{k n}}%
  \le \norm{\vec J_{k\ell}}%
  \bp{1+ C_1|t_n-t_k|^\gamma}%
  + \norm{\vec J_{\ell+1,n}}
  + C_2|t_n-t_k|^{ \alpha}
\end{equation}
if $|t_k-t_{\ell}| \leq \frac{1}{2} |t_n-t_k|$ and
$|t_{\ell+1}-t_n| \leq \tfrac{1}{2} |t_n-t_k|$ and that \[ \vec
J_{kk}%
=\vec 0,\qquad%
\norm{\vec J_{k,k+1}}%
\le L |t_{k+1}-t_k|^{\alpha}\quad\text{for $L \coloneq \dfrac{2C_2}{ 1-2^{1-\alpha}}$.}
\]
Then 
\begin{equation}
  \label[ineq]{eq:7}
  \norm{\vec J_{k n}}%
  \le L |t_n-t_k|^{ \alpha}
  \quad \text{if  $|t_n-t_k|\le
    \delta$ }
\end{equation}
for  $\delta>0$ such that
$
2^{1-\alpha}C_1\delta^{\gamma}
=
{1-2^{1-\alpha}}$.
\end{lemma}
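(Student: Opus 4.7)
The plan is a proof by strong induction on $n-k$, implementing the classical sewing/Davie estimate from rough-path theory (which is essentially what the reference \citep{MR2387018} supplies). The base cases are immediate: $n=k$ gives $\vec J_{kk}=\vec 0$, and $n=k+1$ is exactly the second standing hypothesis on $\norm{\vec J_{k,k+1}}$.

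For the inductive step, fix $(k,n)$ with $n-k\ge 2$ and $|t_n-t_k|\le\delta$, and assume the claim for all index pairs with strictly smaller gap. The crux is choosing a splitting index $\ell$ that meets both midpoint conditions in \cref{eq:ass_Jnm2}. I would take $\ell$ to be the largest element of $\{k,\ldots,n-1\}$ with $|t_\ell-t_k|\le\tfrac{1}{2}|t_n-t_k|$. By maximality, either $\ell=n-1$ (so $|t_{\ell+1}-t_n|=0$) or $|t_{\ell+1}-t_k|>\tfrac{1}{2}|t_n-t_k|$, which by subtraction forces $|t_{\ell+1}-t_n|<\tfrac{1}{2}|t_n-t_k|$. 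In either case both halves have length at most $\tfrac{1}{2}|t_n-t_k|\le\delta$ and strictly fewer steps, so the inductive hypothesis delivers
\[
\norm{\vec J_{k\ell}}\le L|t_\ell-t_k|^\alpha \le L\cdot 2^{-\alpha}|t_n-t_k|^\alpha,
\qquad
\norm{\vec J_{\ell+1,n}}\le L\cdot 2^{-\alpha}|t_n-t_k|^\alpha.
\]

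The closing step is a direct computation. Substituting these estimates into \cref{eq:ass_Jnm2} and pulling out a factor of $|t_n-t_k|^\alpha$ leaves a bracket of the form $L\cdot 2^{1-\alpha}+L\cdot 2^{-\alpha}C_1|t_n-t_k|^\gamma+C_2$. Using $|t_n-t_k|^\gamma\le\delta^\gamma$ and the defining relation $2^{1-\alpha}C_1\delta^\gamma=1-2^{1-\alpha}$ replaces the middle term by $\tfrac{L}{2}(1-2^{1-\alpha})$; the resulting bracket is bounded by $L$ precisely when $L\ge 2C_2/(1-2^{1-\alpha})$, which is the assumed definition of $L$. This closes the induction and gives \cref{eq:7}.

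The only real difficulty is bookkeeping: checking that the boundary choices $\ell=k$ (in which case $\vec J_{k\ell}=\vec 0$) and $\ell=n-1$ (in which case $\vec J_{\ell+1,n}=\vec 0$) do not break the argument, and tracking how the halving contraction $2^{-\alpha}$ competes with the perturbative factor $1+C_1|t_n-t_k|^\gamma$. The constraint $\alpha>1$ enters exactly to guarantee $1-2^{1-\alpha}>0$, so that $L$ is finite and the two $2^{-\alpha}$ contributions from the halves sum to something strictly less than $1$; without this, the sewing would fail to contract and no choice of $\delta$ would suffice.
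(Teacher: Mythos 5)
Your proof is correct and follows essentially the same route as the paper: strong induction on $n-k$, splitting at the largest $\ell$ with $|t_\ell-t_k|\le\tfrac12|t_n-t_k|$, and closing via the relation $2^{1-\alpha}C_1\delta^\gamma = 1-2^{1-\alpha}$ together with $L=2C_2/(1-2^{1-\alpha})$. The only (helpful) difference is that you state the inductive hypothesis as holding for all pairs of strictly smaller gap, whereas the paper's printed hypothesis mentions only pairs sharing the left endpoint $k$ even though it is applied to $J_{\ell+1,n}$; your formulation is the one actually needed.
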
%

\begin{proof} By the assumption on $\vec J_{kk}$ and $\vec
  J_{k,k+1}$, \cref{eq:7} holds for $|k-n|<2$. We complete the
  proof by induction on $n-k$ by showing \cref{eq:7} given
  \begin{equation}%
    \label[ineq]{eq:indhyp}%
    \text{ $\norm{\vec J_{k n'}} \leq L |t_{n'}-t_k|^{
        \alpha}$ \quad for all $k \leq n' < n$.}
  \end{equation}
  Let $\ell$ be the largest integer satisfying $k \leq \ell < n$
  and $|t_\ell-t_k| \leq \frac{1}{2}|t_n-t_k|$.  Then,
  also 
$|t_n-t_{\ell+1}| \leq \tfrac{1}{2} |t_n-t_k|$ and,
by~\cref{eq:indhyp},
  \[
  \norm{\vec J_{k\ell}}%
  \leq L|t_{\ell}-t_k|^{\alpha}%
  \le L 2^{-\alpha} |t_n-t_k|^{\alpha}, \qquad%
  \norm{\vec J_{\ell+1,n}}%
  \leq L|t_n-t_{\ell+1}|^{ \alpha}%
  \le L 2^{-\alpha} |t_n-t_k|^{\alpha}.
  \] 
From~\cref{eq:ass_Jnm2},
  \begin{align*}
    \norm{\vec J_{k n}}%
    &\le \norm{\vec J_{k\ell}}%
    \bp{1+C_1 |t_n-t_k|^{\gamma}}%
    + \norm{\vec J_{\ell+1,n}}%
    + C_2|t_n-t_k|^{ \alpha}\\
    &\le L2^{-\alpha}|t_n-t_k|^{ \alpha}
    \bp{1+C_1|t_n-t_{\ell}|^{\gamma}}%
    +L2^{-\alpha}|t_n-t_k|^{\alpha}+C_2 |t_n-t_k|^{\alpha}\\
    &\le \bp{L 2^{1-\alpha}%
      +L 2^{-\alpha}C_1|t_n-t_{\ell}|^{\gamma}+C_2}%
    |t_n-t_k|^{ \alpha}.
  \end{align*}
  Therefore, $ \norm{\vec J_{kn}}%
  \le L |t_n-t_k|^{ \alpha} $ provided that
  \[
  L 2^{1-\alpha}+ L 2^{-\alpha}C_1|t_n-t_{\ell}|^{\gamma} + C_2 
  \le L.
  \]
  By choice of $L$, this means
  \begin{equation}
    2^{-\alpha}{C_1|t_n-t_{\ell}|^{\gamma}}%
    \le {(1-2^{1-\alpha})-\frac{C_2}{L}}%
    =    \frac{1-2^{1-\alpha}}{2}.
    \label[ineq]{eq:15}
  \end{equation}
  Thus, the proof is complete as long as we choose $|t_n-t_\ell|$ to
  satisfy \cref{eq:15}. This is guaranteed if $|t_n-t_\ell|\le
  \delta$ as $\abs{t_n-t_\ell}\le \abs{t_n-t_k}\le \delta$.
\end{proof}

We now remove the assumption that $|t_n-t_k|\le \delta$ and allow
$|t_n-t_k|\le T$.
\begin{corollary}\label[corollary]{cor:this_is_it}%
  Suppose the conditions of \cref{lemma:pullout} hold. Then, if
  $0 \leq t_k \leq t_n\le T$,
  \[
  \norm{\vec J_{k n}} \le L'|t_n-t_k|^\alpha,\qquad
  L'=L\ceil*{\frac{ T }{\delta}}.
  \]
\end{corollary}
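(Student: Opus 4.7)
The plan is to extend \cref{lemma:pullout} from short intervals ($|t_n-t_k|\le\delta$) to long intervals ($|t_n-t_k|\le T$) by a partitioning argument that inflates the constant by the factor $\lceil T/\delta\rceil$. First, if $|t_n-t_k|\le\delta$, \cref{lemma:pullout} directly gives $\norm{\vec J_{kn}}\le L|t_n-t_k|^\alpha$, which is bounded by $L'|t_n-t_k|^\alpha$ since $L'\ge L$; so only the case $|t_n-t_k|>\delta$ requires work.

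For this case, I would partition $[t_k,t_n]$ into $M\le\lceil T/\delta\rceil$ consecutive sub-intervals of length at most $\delta$, by selecting a subsequence $k=k_0<k_1<\cdots<k_M=n$ with $|t_{k_{i+1}}-t_{k_i}|\le\delta$ for each $i$. \Cref{lemma:pullout} then supplies the local bound $\norm{\vec J_{k_i,k_{i+1}}}\le L|t_{k_{i+1}}-t_{k_i}|^\alpha$ on each piece.

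The substantive step is the aggregation of these local bounds into a bound on $\norm{\vec J_{k_0,k_M}}$. The natural tool is recursive application of \cref{eq:ass_Jnm2}, splitting the current interval at an index near its midpoint (the constraints $|t_\ell-t_k|\le\tfrac12|t_n-t_k|$ and $|t_{\ell+1}-t_n|\le\tfrac12|t_n-t_k|$ in \cref{eq:ass_Jnm2} force any admissible split to be near the middle of the interval). The recursion bottoms out as soon as every remaining sub-interval has length at most $\delta$, at which point the local bound from the previous paragraph takes over.

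The main obstacle is the book-keeping of constants. A naive recursion allows the multiplicative factors $(1+C_1|t_n-t_k|^\gamma)$ to accumulate through the $\log_2(T/\delta)$ levels of the split tree, giving exponential rather than linear growth. To recover the claimed linear growth $L'=L\lceil T/\delta\rceil$, I would exploit both defining relations---$C_1\delta^\gamma=2^{\alpha-1}-1$ and $L(1-2^{1-\alpha})=2C_2$---in a manner parallel to the telescoping already performed inside \cref{lemma:pullout}, so that the accumulated multiplicative factors collapse and only a single factor $\lceil T/\delta\rceil$ survives in $L'$.
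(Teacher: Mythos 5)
Your high-level plan (partition $[t_k,t_n]$ into at most $\lceil T/\delta\rceil$ pieces of length at most $\delta$, apply \cref{lemma:pullout} locally, then glue) matches the paper's. The gluing step, however, is where you diverge and where a genuine gap appears. You assert that the admissibility constraints in \cref{eq:ass_Jnm2} ``force any admissible split to be near the middle of the interval,'' and so you propose a recursive midpoint decomposition. That reading is too restrictive: the constraints $|t_k-t_\ell|\le\tfrac12|t_n-t_k|$ and $|t_{\ell+1}-t_n|\le\tfrac12|t_n-t_k|$ only require the gap $[t_\ell,t_{\ell+1}]$ to straddle the midpoint; in particular $\ell=k$ is admissible whenever $t_{k+1}$ lies in the right half. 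The paper exploits exactly this: it applies \cref{eq:ass_Jnm2} with $k=\ell$, so that $\vec J_{k\ell}=\vec J_{kk}=\vec 0$ and the multiplicative factor $\bp{1+C_1|t_n-t_k|^\gamma}$ is annihilated, leaving the purely additive recursion $\norm{\vec J_{kn}}\le\norm{\vec J_{\ell+1,n}}+C_2|t_n-t_k|^\alpha$. Iterating this piece by piece, together with $C_2\le L$, gives the advertised linear growth $L'=L\lceil T/\delta\rceil$.

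Your plan, taken literally, does not reach that constant. Over the $\sim\log_2(T/\delta)$ levels of a dyadic splitting tree the multiplicative factors compound to something of order $\prod_j\bp{1+C_1(T/2^j)^\gamma}\sim\exp\bp{C_1T^\gamma/(1-2^{-\gamma})}$, which grows exponentially in $C_1$, whereas $\lceil T/\delta\rceil\sim T\,C_1^{1/\gamma}$ grows only polynomially. The defining relation $C_1\delta^\gamma=2^{\alpha-1}-1$ controls $C_1|t|^\gamma$ only on intervals of length at most $\delta$; it does not tame the larger factors arising at the upper levels of a recursion that starts from length $T$, so the hoped-for ``collapse'' of the accumulated factors is not available through that route. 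The fix is precisely the $\ell=k$ observation above: once the multiplicative factor is killed, no telescoping of $(1+C_1\cdot)$ terms is needed at all.
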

\begin{proof} If $\delta\ge T$, this is implied by
  \cref{lemma:pullout}. Assume $\delta<T$.  Divide the interval
  $[t_k,t_n]$ into the partition
  $t_k=t_{j_0}<t_{j_1}<\dots<t_{j_{k_{\max}}}= t_n$, where each
  $|t_{j_{k+1}}-t_{j_k}| \le \delta$ or $j_{k+1}-j_k=1$. Notice
  that, by choice of the partition, we can ensure that $k_{\max}\le
  \ceil{T/\delta}$. By \cref{lemma:pullout}, we must have
  $\norm{\vec J_{j_k,j_{k+1}}}\le L|t_{j_{k+1}}-
  t_{j_k}|^{\alpha}$. Finally, by~\cref{eq:ass_Jnm2} with
  $k=\ell=j_k$ and $\ell+1=j_{k+1}$ and $n=j_{k+2}$,
  \[
  \norm{\vec J_{j_{k},j_{k+2}}}%
  \le \norm{\vec J_{j_{k+1},j_{k+2}}}+C_2|t_{j_{k+2}}-t_{j_{k}}|^\alpha.%
  \]
  As $C_2\le L=2 C_2/(1-2^{1-\alpha})$,
  \[
  \norm{\vec J_{j_{k},j_{k+2}}}%
  \le 2L \,|t_{j_{k+2}}-t_{j_k}|^\alpha.
  \]
  Thus, the result now holds with $L'=2L$ for $|t_n-t_k|\le
  2\delta$. This argument can be repeated $k_{\max}$ times to
  gain $\norm{J_{kn}}\le L\ceil{T /\delta} |t_n-t_k|^\alpha$.
\end{proof}

\subsection{Main result}
We now use the two assumptions to show pathwise convergence.  

\begin{theorem}[convergence rate $\gamma$]
  \label[theorem]{thm:global_alt}
  Let \cref{ass:4.1,ass:4.2} hold for the SDE~\eqref{eq:sode} and
  \cref{ass:newx} hold for $\method^{\gamma}_{kn}$ with the
  partition $(\tau_0,\dots,\tau_N)\in \mathcal{T}$. Then, 
  \begin{equation}\label[ineq]{eq:star}
  \max_{n=0,\dots,N}\norm{\method^\gamma_{0n}(\vec y_0)%
    -\vec y(\tau_n; 0,\vec y_0)}%
  \le \newxconst\, \sdeconst(\epsilon,T)\, h^{\gamma-\epsilon},
  \end{equation}
  where
  \begin{align*}
  \sdeconst(\epsilon,T)%
   &\coloneq  T^{(1+\epsilon)/2}\\ &\quad+%
  \frac{\sdeconstd(\sdeconst+2) }{(1-2^{-\epsilon/6})}
  \pp{ T^{1+\epsilon/6}%
    +\frac{
      T^{2+\epsilon/6}(
\sdeconstd(\sdeconst+1))^{2/(1-2\epsilon)}}{(2^{\epsilon/6}-1)^{2/(1-2\epsilon)}}}.%
 \end{align*}
  
\end{theorem}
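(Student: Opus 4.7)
The plan is to apply Corollary \ref{cor:this_is_it} to a \emph{compensated} error quantity that factors out the sum of local truncation errors, whose pathwise behaviour is already controlled by \cref{ass:newx}. Set $y_j := \method^\gamma_{0j}(y_0)$ and
\[
A_{k,n} := y(\tau_n;\tau_k,y_k)-y_n,\quad Y_{k,n} := X_{k,n}(y_0)=\sum_{j=k}^{n-1}\delta(\tau_j,\tau_{j+1},y_j),\quad \tilde J_{k,n} := A_{k,n}-Y_{k,n}.
\]
\Cref{ass:newx} gives $\|Y_{0,N}\|\le \newxconst T^{(1+\epsilon)/2}h^{\gamma-\epsilon}$ directly, which accounts for the first summand $T^{(1+\epsilon)/2}$ of $\sdeconst(\epsilon,T)$. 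It then remains to bound $\|\tilde J_{0,N}\|$ by the remainder.

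I would first check the base-case hypothesis of \cref{lemma:pullout}: since $A_{k,k+1}=\delta(\tau_k,\tau_{k+1},y_k)=Y_{k,k+1}$, one has $\tilde J_{k,k+1}=0$ and $\tilde J_{k,k}=0$, so the single-step bound holds with any $L$. Next I would derive the main recursion by splitting at an intermediate index $\ell$. The flow property gives $y(\tau_n;\tau_k,y_k)=y(\tau_n;\tau_{\ell+1},y_{\ell+1}+A_{k,\ell+1})$, and applying \cref{ass:4.2} with $s=\tau_{\ell+1}$, $t=\tau_n$ yields
\[
A_{k,n} = A_{\ell+1,n}+A_{k,\ell+1}+R,\qquad \|R\|\le \sdeconstd(\tau_n-\tau_{\ell+1})^{1/2-\epsilon/3}\|A_{k,\ell+1}\|.
\]
Treating the single step $[\tau_\ell,\tau_{\ell+1}]$ via \cref{ass:4.1} and \cref{ass:4.2} gives $A_{k,\ell+1} = A_{k,\ell}+\delta_\ell+R_1$ with $\|A_{k,\ell+1}\|\le\sdeconst\|A_{k,\ell}\|+\|\delta_\ell\|$, while $Y_{k,n}=Y_{k,\ell+1}+Y_{\ell+1,n}$ with $Y_{k,\ell+1}=Y_{k,\ell}+\delta_\ell$. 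Subtracting causes $\delta_\ell$ to cancel, leaving $\tilde J_{k,n}=\tilde J_{k,\ell+1}+\tilde J_{\ell+1,n}+R$. Expressing $\tilde J_{k,\ell+1}$ via $\tilde J_{k,\ell}+R_1$, bounding $\|A_{k,\ell}\|\le\|\tilde J_{k,\ell}\|+\|Y_{k,\ell}\|$, and inserting the \cref{ass:newx} estimates $\|Y_{k,\ell}\|\le\newxconst(\tau_\ell-\tau_k)^{(1+\epsilon)/2}h^{\gamma-\epsilon}$ and $\|\delta_\ell\|\le\newxconst(\tau_{\ell+1}-\tau_\ell)^{(1+\epsilon)/2}h^{\gamma-\epsilon}$ would eventually yield
\[
\|\tilde J_{k,n}\|\le \bigl(1+C_1(\tau_n-\tau_k)^{\gamma_L}\bigr)\|\tilde J_{k,\ell}\|+\|\tilde J_{\ell+1,n}\|+C_2(\tau_n-\tau_k)^{1+\epsilon/6},
\]
with $C_1=\sdeconstd(\sdeconst+1)$, $C_2=\tfrac12\sdeconstd(\sdeconst+2)\newxconst h^{\gamma-\epsilon}$, $\gamma_L=\tfrac12-\epsilon$, and $\alpha=1+\epsilon/6>1$ (the latter coming from $\tfrac12-\tfrac\epsilon3+\tfrac{1+\epsilon}{2}=1+\tfrac{\epsilon}{6}$, i.e.\ the Hölder exponent of \cref{ass:4.2} combined with the time exponent of \cref{ass:newx}).

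Finally I would invoke \cref{cor:this_is_it} to conclude $\|\tilde J_{0,N}\|\le L' T^{1+\epsilon/6}$ with $L=2C_2/(1-2^{-\epsilon/6})$ and $\delta$ determined by $\delta^{\gamma_L}=(2^{\epsilon/6}-1)/C_1$. Using $L'\le L+LT/\delta$ and substituting the explicit values of $C_1$, $C_2$, $\gamma_L$, $\alpha$ produces exactly the bracketed terms $T^{1+\epsilon/6}+T^{2+\epsilon/6}(\sdeconstd(\sdeconst+1))^{2/(1-2\epsilon)}/(2^{\epsilon/6}-1)^{2/(1-2\epsilon)}$ inside $\sdeconst(\epsilon,T)$, and adding the bound for $\|Y_{0,N}\|$ furnishes the first summand $T^{(1+\epsilon)/2}$, completing the inequality \cref{eq:star}.

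The main obstacle is the bookkeeping in the recursion step: the expansion of $A_{k,n}$ involves several Hölder and Lipschitz factors from \cref{ass:4.1}, \cref{ass:4.2}, and \cref{ass:newx} (the last applied both to partial sums $Y_{k,\ell}$ and to individual steps $\delta_\ell$), and these must be collected so that the coefficient of $\|\tilde J_{k,\ell}\|$ takes the form $1+C_1(\tau_n-\tau_k)^{\gamma_L}$, the coefficient of $\|\tilde J_{\ell+1,n}\|$ is exactly $1$, and the inhomogeneous term is $C_2(\tau_n-\tau_k)^\alpha$ with precisely the constants $\sdeconstd(\sdeconst+1)$ and $\sdeconstd(\sdeconst+2)$ displayed in $\sdeconst(\epsilon,T)$.
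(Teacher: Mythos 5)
Your proposal is correct and follows essentially the same route as the paper: the paper's compensated error $u_{kn}=y_n-y(\tau_n;\tau_k,y_k)+X_{kn}(y_0)$ is exactly $-\tilde J_{k,n}$, and your residuals $R$, $R_1$ coincide (up to sign) with the paper's $v_{k\ell n}$, $W_{k\ell}$, so the recursion you derive and the final appeal to \cref{cor:this_is_it} match the paper's argument. The only minor discrepancy is the factor $\tfrac12$ in your $C_2$; tracking the three inhomogeneous contributions gives $C_2=\sdeconstd(\sdeconst+2)\newxconst h^{\gamma-\epsilon}$ without that factor, but this does not affect the structure of the argument.
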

\begin{proof}
  Let $\vec y_n \coloneq \method_{0n}^{\gamma}(\vec y_0)$
  and consider $ \vec u_{k n}%
   \coloneq \vec y_n - \vec y(\tau_n; \tau_k, \vec y_k) +
  \vec X_{k n}(\vec y_0)$, for $X_{kn}(y_0)$ given in
  \cref{ass:newx}.  Note that $\vec u_{k,k+1}=\vec 0$ as
  \[\vec X_{k,k+1}=\vec \delta(\tau_k, \tau_{k+1}, \vec y_k)%
  =y(\tau_{k+1};\tau_k,y_k)-y_{k+1}\]
  by~\cref{eq:lte}. Further, $\vec u_{kn}$ captures the
  difference between the integrator $\vec S^\gamma_{kn}$ and
  the true solution corrected by $\vec X_{kn}$ and we expect
  it to be small. Rearranging, we have
  \begin{align}
    \vec y_n%
    &=\vec y(\tau_n; \tau_k, \vec y_k)%
    -\vec X_{k n}(\vec y_0) + \vec u_{k n}.\label{eq:b}
  \end{align}
  We prove the result by estimating the terms in \cref{eq:b} with
  $k=0$,
  \begin{equation}
  \norm{\vec y_n-\vec y(\tau_n; 0, \vec y_0)}%
  \le 
   \norm{\vec X_{0n}(\vec y_0)}%
  + \norm{\vec u_{0n}}.\label[ineq]{eq:smith}
  \end{equation}
  The first term satisfies $\norm{\vec X_{0n}(\vec y_0)}\le \newxconst
  T^{(1+\epsilon)/2} h^{\gamma-\epsilon}$ by \cref{ass:newx}, which
  is consistent with \cref{eq:star}. We will bound the second term by
  using~\Cref{cor:this_is_it} and hence show
  \cref{eq:star}. 

We will develop the inequality required
  to apply \cref{cor:this_is_it}.
  We first define two useful quantities $W_{k\ell}$ and
  $v_{k\ell n}$ and derive simple bounds on their
  magnitudes:

First  let 
\[z^1\coloneq \vec
  y(\tau_{\ell+1}; \tau_\ell,
  \vec y(\tau_\ell;\tau_k,y_k)-X_{k\ell}(y_0)+u_{k\ell}) -\vec
  X_{\ell,\ell+1}(\vec y_0)\quad\text{and}\quad z^2\coloneq \vec y(\tau_{\ell+1}; \tau_k, \vec
  y_k);
\]
then
define
  \begin{equation}
    \label{eq:2}
      \vec v_{k\ell n}%
       \coloneq \pp{\vec y\pp{\tau_n; \tau_{\ell+1},
        {z^1}}-z^1}%
      - \pp{\vec y\pp{\tau_n; \tau_{\ell+1}, {z^2}}
      -z^2}.
    \end{equation}
  This quantity is small when $|\tau_n-\tau_{l+1}|$ is
  small because, applying~\cref{eq:lte2},  
  we have
  \begin{align}
&    \norm{\vec v_{k\ell n}}\notag\\%
    &\le\sdeconstd \abs{\tau_n-\tau_{\ell+1}}^{1/2-\epsilon/3}\notag\\
    &\quad\times\norm{y\pp{\tau_{\ell+1}; \tau_\ell,
        y(\tau_\ell;\tau_k,y_k)%
        -X_{k\ell}(y_0)%
        +u_{k\ell}}-X_{\ell,\ell+1}(y_0)- y(\tau_{\ell+1};\tau_k,y_k)}\notag \\
    &=\sdeconstd \abs{\tau_n-\tau_{\ell+1}}^{1/2-\epsilon/3}%
    \notag\\%
    &\quad\times\Big\|y\pp{\tau_{\ell+1};%
        \tau_\ell,%
        y(\tau_\ell;\tau_k,y_k) -X_{k\ell}(y_0)+u_{k\ell}%
      }%
      -X_{\ell,\ell+1}(y_0)\notag \\%
  &\qquad    - y\pp{\tau_{\ell+1},\tau_\ell,y(\tau_\ell;\tau_k,y_k)}\Big\|%
    \notag\\%
    &\le\sdeconstd \abs{\tau_n-\tau_{\ell+1}}^{1/2-\epsilon/3}%
    \bp{\sdeconst \norm{X_{k\ell}(y_0)-u_{k\ell}}%
      + \norm{X_{\ell,\ell+1}(y_0)}}.  \label[ineq]{eq:vb}
  \end{align}
 Here, we use \cref{eq:boom} in the last step.  

Let
\begin{gather}\label{eq:W}
  \begin{split}
    W_{k\ell} &\coloneq y\pp{\tau_{\ell+1}; \tau_\ell,
      y(\tau_\ell;\tau_k,y_k)
        -X_{k\ell}(y_0)+u_{k\ell}}
    -y\pp{ \tau_{\ell+1};\tau_\ell, {y(\tau_\ell; \tau_k,y_k)}}\\%
&\quad    +X_{k\ell}(y_0)-u_{k\ell}. 
  \end{split}
\end{gather}
  As with $v_{k\ell n}$, this is bound by applying~\cref{eq:lte2},
  \begin{equation}\label[ineq]{eq:wb}
    \norm{W_{k\ell}}%
    \le \sdeconstd |\tau_{\ell+1}-\tau_\ell|^{1/2-\epsilon/3}%
    \norm{X_{k\ell}(y_0)-u_{k\ell}}.
  \end{equation}

  We now aim to apply \cref{cor:this_is_it} to $u_{kn}.$ We
  write $u_{kn}$ in terms of $v_{k\ell n}$ and $W_{k\ell}$,
  and use the above bounds to derive an inequality
  \eqref{eq:ass_Jnm2} for $u_{kn}$.  Start by applying~\cref{eq:b} three
  times, for $k\le \ell<n$,
  \begin{align*}
    \vec y_n%
    &= \vec y\pp{\tau_n; \tau_{\ell+1}, y_{\ell+1}}%
    - \vec X_{\ell+1,
      n}(\vec y_0)+ \vec u_{\ell+1, n}\notag\\
 &= \vec y\pp{\tau_n;
    \tau_{\ell+1}, y(\tau_{\ell+1};
    \tau_\ell,y_\ell)-X_{\ell,\ell+1}(y_0)} - \vec X_{\ell+1,
      n}(\vec y_0)+ \vec u_{\ell+1, n}\qquad\text{as $u_{\ell,\ell+1}=0$}\notag\\
    &= \vec y\pp{\tau_n; \tau_{\ell+1}, \smash{\underbrace{y\pp{\tau_{\ell+1};
    \tau_\ell,\vec y(\tau_\ell; \tau_k,\vec y_k) - \vec X_{k\ell}(\vec
    y_0)+\vec u_{k\ell}}-X_{\ell,\ell+1}(y_0)}_{=z^1}}}\rule[-5mm]{0mm}{1mm}%
\\&\quad    - \vec X_{\ell+1, n}(\vec y_0)+ \vec
    u_{\ell+1, n}.
  \end{align*}
  As $\vec y(\tau_n; \tau_\ell, \vec y(\tau_\ell; \tau_k,\vec
  y_k))=\vec y(\tau_n; \tau_k, \vec y_k)$, \cref{eq:2}
  gives
  \begin{align*}
    \vec v_{k\ell n}%
    &= \pp{\vec y_n +\vec X_{\ell+1, n}(\vec y_0)-\vec u_{\ell+1,
      n}}- \vec
    y(\vec \tau_n; \tau_k,\vec y_k)\\
    &\quad- \pp{\smash{\underbrace{y\pp{\tau_{\ell+1}; \tau_\ell,
    y(\tau_\ell,\tau_k,y_k)-X_{k\ell}(y_0)+u_{k\ell}}%
    -X_{\ell,\ell+1}(y_0)}_{=z^1}}}\vphantom{\underbrace{X}{X}} + y(\tau_{\ell+1},\tau_k,y_k).
  \end{align*}
  Now, substitute~\cref{eq:W},
  \begin{gather}
    \begin{split}
      v_{k\ell n} &= \pp{\vec y_n+\vec X_{\ell+1, n}(\vec y_0)-\vec
        u_{\ell+1, n}}%
 +X_{\ell,\ell+1}(y_0)      - \vec
      y(\vec \tau_n; \tau_k,\vec y_k)\\
      &\quad- \pp{W_{k\ell}-X_{k\ell}(y_0)+u_{k\ell}}.
    \end{split}\label{eq:love}
  \end{gather}
  Rearranging, and using $X_{k\ell}(y_0) +X_{\ell,\ell+1}(y_0)
  +X_{\ell+1,n}(y_0) =X_{k n}(y_0)$,
  \[
  \vec v_{k\ell n}+\vec u_{\ell+1, n} -\vec X_{k n}(\vec y_0)%
  +\vec u_{k\ell}+ W_{k\ell}%
  = \vec y_n - \vec y(\tau_n; \tau_k,\vec y_k).
  \]
  Starting from~\cref{eq:b},
  \begin{align}
    \vec u_{kn} &=\pp{\vec y_n - \vec y(\tau_n; \tau_k,\vec
      y_k)}%
    +\vec X_{kn}(\vec y_0)\notag\\
    &=\pp{ \vec v_{k\ell n}+\vec u_{\ell+1, n} -\vec X_{k
        n}(\vec y_0)+\vec u_{k\ell}+ W_{k\ell} }%
    +\vec X_{k n}(\vec y_0)\notag\\
    &=\vec v_{k\ell n}+\vec u_{\ell+1, n} +\vec
    u_{k\ell}+W_{k\ell}.\label{eq:clean}
  \end{align}
  Apply~\cref{eq:vb} and~\cref{eq:wb} to~\cref{eq:clean},
  \begin{align*}
    \norm{u_{kn}}&\le\sdeconstd
    \abs{\tau_n-\tau_{\ell+1}}^{1/2-\epsilon/3}%
    \bp{\sdeconst \norm{X_{k\ell}(y_0)-u_{k\ell}}%
      + \norm{X_{\ell,\ell+1}(y_0)}}\\%
    &\quad+\norm{u_{\ell+1,n}}+ \norm{u_{k\ell}}\\%
    &\quad+ \sdeconstd |\tau_{\ell+1}-\tau_\ell|^{1/2-\epsilon/3}%
    \norm{X_{k\ell}(y_0)-u_{k\ell}}.
  \end{align*}
  Then, with \cref{ass:newx},
  \begin{align*}
    \norm{u_{kn}}%
    &\le \norm{u_{k\ell}} \bp{%
      1+ \sdeconstd \pp{%
        \sdeconst |\tau_n-\tau_{\ell+1}|^{1/2-\epsilon/3}%
        + |\tau_{\ell+1}-\tau_\ell|^{1/2-\epsilon/3}}
    }\\
    &\quad+\norm{u_{\ell+1,n}}\\
&\quad+%
    \sdeconstd \newxconst |\tau_\ell-\tau_k|^{(1+\epsilon)/2}
    h^{\gamma-\epsilon} \bp{%
      \sdeconst |\tau_n-\tau_{\ell+1}|^{1/2-\epsilon/3} +
      |\tau_{\ell+1}-\tau_\ell|^{1/2-\epsilon/3}}\\ %
    &\quad+ \sdeconstd |\tau_n-\tau_{\ell+1}|^{1/2-\epsilon/3}
    \newxconst |\tau_\ell-\tau_{\ell+1}|^{\gamma+1/2-\epsilon/2}.
  \end{align*}
  Choose $\ell$ so that
  $|\tau_k-\tau_\ell|,|\tau_n-\tau_{\ell+1}|\le
  |\tau_n-\tau_k|/2$.  Then, we have (dropping all the
  $1/2^\alpha<1$ factors)
  \begin{align*}
    \norm{u_{kn}}%
    &\le\norm{u_{k\ell}}\bp{%
      1+ \sdeconstd \pp{%
        \sdeconst |\tau_n-\tau_{k}|^{1/2-\epsilon/3}%
        + |\tau_{n}-\tau_k|^{1/2-\epsilon/3}}%
    }\\
    &\quad+\norm{u_{\ell+1,n}}%
    \\%
&\quad+\sdeconstd \newxconst |\tau_n-\tau_k|^{(1+\epsilon)/2}
    h^{\gamma-\epsilon}%
    \bp{%
      \sdeconst |\tau_n-\tau_{k}|^{1/2-\epsilon/3} +
      |\tau_{n}-\tau_k|^{1/2-\epsilon/3}}\\ %
    &\quad+ \sdeconstd |\tau_n-\tau_{k}|^{1+\epsilon/6} \newxconst
    h^{\gamma-\epsilon}.
  \end{align*}
  Simplifying, we get
  \begin{align*}
    \norm{u_{kn}}%
    &\le\norm{u_{k\ell}}%
    \bp{ 1+ \sdeconstd (\sdeconst+1)
      |\tau_n-\tau_{k}|^{1/2-\epsilon} }%
    +\norm{u_{\ell+1,n}}\\%
    &\quad+\newxconst|\tau_n-\tau_k|^{1+\epsilon/6}
    h^{\gamma-\epsilon} \sdeconstd (\sdeconst+2).
  \end{align*}
Thus, we have shown that \cref{eq:ass_Jnm2} holds for
$J_{kn}=u_{kn}$ with
\[\gamma=1/2-\epsilon,\quad
  \alpha=1+\epsilon/6,\quad %
C_1=\sdeconstd(\sdeconst+1),
\] and
  \[
  C_2%
  = \newxconst h^{\gamma-\epsilon}\sdeconstd 
    (\sdeconst+2).
  \]
  \cref{cor:this_is_it} implies that
\[
\norm{\vec u_{k n}} \le L\ceil{ T /\delta}
  T^{1+\epsilon/6}\le L
  \pp{T^{1+\epsilon/6}+\frac{T^{2+\epsilon/6}}{\delta}},
\]
 for %
  $L=2 C_2/(1-2^{-\epsilon/6})$ and $\delta$ such that
  $
  2^{-\epsilon/6}C_1 \delta^{1/2-\epsilon}%
  ={1-2^{-\epsilon/6}}$.
Rearranging the equation for $\delta$, we have
\[
  \delta^{1/2-\epsilon}%
  =\frac{2^{\epsilon/6}-1}{C_1}%
  \quad\Rightarrow\quad
  \delta%
  =\frac{(2^{\epsilon/6}-1)^{2/(1-2\epsilon)}}{C_1^{2/(1-2\epsilon)}}.
  \]

  Returning to \cref{eq:smith}, we see that
\[
\norm{y_n-y(\tau_n;0,\vec y_0)}%
\le \newxconst T^{(1+\epsilon)/2} h^{\gamma-\epsilon}%
+ L\pp{T^{1+\epsilon/6}+\frac{ T^{2+\epsilon/6} }{\delta}}.
\]

Define 
  \[
  \sdeconst(\epsilon,T)%
   \coloneq T^{(1+\epsilon)/2}+%
  \frac{2 C_2/(\newxconst h^{\gamma-\epsilon})} {1-2^{-\epsilon/6}}
  \pp{ T^{1+\epsilon/6}%
    +\frac{
      T^{2+\epsilon/6}C_1^{2/(1-2\epsilon)}}{(2^{\epsilon/6}-1)^{2/(1-2\epsilon)}}}.%
  \]
Note that $\sdeconst(\epsilon,T)$ is independent of $h$ and
$\newxconst$ (by definition of $C_2$). We have
\[
\norm{y_n-y(\tau_n;0,\vec y_0)}%
\le  \newxconst \sdeconst(\epsilon,T) h^{\gamma-\epsilon}
\]
and \cref{eq:star} is
  now proved.
\end{proof}

\section{Fixed time-steps}%
\label{sec:em}%

To demonstrate the theory, we consider the case of fixed time-steps with the {\EM} method.  In this case, \cref{ass:bluenew}
holds with $\gamma=1/2$ with $\xconst$ uniform in the step size \citep[Corollary
10.17]{friz_multidimensional_2010}. This means \cref{thm:global1}
does not prove convergence in any sense and we need
Theorem~\ref{thm:global_alt} even to prove convergence, as well
as to establish the correct rate of convergence.  The
following  lemma is key to establishing the bound on
the truncation-error sum necessary for
Theorem~\ref{thm:global_alt}.
For $p\ge 1$, let $L^p(\Omega)$ denote the Banach space of
real-valued random variables $X$ with finite $p$th moments
and norm $\norm{X}_{L^p(\Omega)} \coloneq
(\mean{\abs{X}^p})^{1/p}$.
\begin{lemma}\label[lemma]{lemma:p}
  Consider predictable processes $\{\phi_N(s)\colon s\in[0,T]\}$
  for $N\in\naturals$. Fix $p\ge 1$ and suppose that there
  exists $K_p^*>0$ such that
  \begin{equation}\label[ineq]{eq:one_alt}
    \norm{\bar{\phi}_N^2}_{L^p(\Omega)}
    \le K_p^*,  \qquad\text{for }
    \bar{\phi}_N%
     \coloneq \sup_{0\le  s \le T} \abs{\phi_N(s)}.
  \end{equation}
  Choose $\lambda\ge 0$ and $i\in\{0,1,\dots,m\}$. Define
  \begin{align*}
    X^N(t)%
     \coloneq &\frac{1}{N^\lambda}%
    \int_0^t \phi_N(s)\,dW^i(s).
  \end{align*}
  Then, for all $\epsilon>0$, there exists $C\in
  L^p(\Omega)$ such that
  \begin{align}
    \norm{X^N(s)-X^N(t)} %
    \le C \frac1{N^{\lambda-\epsilon/2}} \times%
    \begin{cases}    |s-t|,&i=0,\\
      |s-t|^{1/2-\epsilon},&i=1,\dots,m,  
    \end{cases}\label[ineq]{eq:loon}
  \end{align}
  for $ N\ge 1$ and $ 0\le s<t\le T$. Further, we can choose $C$
  so that $\norm{C}_{L^p(\Omega)}\le \bar C(\epsilon,p,T,K_p^*)$  for
  some $\bar C(\epsilon,p,T,K_p^*)$ independent of $\phi_N$.
\end{lemma}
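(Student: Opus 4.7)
The plan is to treat the pathwise-trivial drift case $i = 0$ and the martingale case $i \in \{1, \dots, m\}$ separately, then extract a single $N$-uniform $C$ at the end via a union bound.

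For $i = 0$, the bound $|X^N(t) - X^N(s)| \le N^{-\lambda}\bar\phi_N|t-s|$ is immediate. It suffices to find $C \in L^p$ dominating $N^{-\epsilon/2}\bar\phi_N$ for every $N \ge 1$. I would set $C := \sup_{N \ge 1} N^{-\epsilon/2}\bar\phi_N$ and bound $\mathbb{E}[C^{2p}] \le \sum_{N\ge 1} N^{-p\epsilon}\mathbb{E}[\bar\phi_N^{2p}] \le (K_p^*)^p \sum_{N\ge 1} N^{-p\epsilon}$, finite once $p\epsilon > 1$; then $\|C\|_{L^p} \le \|C\|_{L^{2p}}$ gives the claimed dependence, using the identity $\|\bar\phi_N\|_{L^{2p}} = \|\bar\phi_N^2\|_{L^p}^{1/2}$ and the uniform bound $K_p^*$.

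For $i \ge 1$ the backbone is Burkholder--Davis--Gundy followed by a quantitative Kolmogorov--Chentsov (or Garsia--Rodemich--Rumsey) argument. BDG at exponent $2p$, combined with the pathwise estimate $\int_s^t \phi_N^2\,dr \le \bar\phi_N^2 |t-s|$ and the hypothesis, yields
\begin{equation*}
\mathbb{E}\bigl[|X^N(t)-X^N(s)|^{2p}\bigr] \le \frac{C_{\mathrm{BDG}}(p)(K_p^*)^p}{N^{2p\lambda}}(t-s)^p.
\end{equation*}
Kolmogorov--Chentsov then produces, for each $N$, a random Hölder seminorm $K^N$ with $|X^N(t) - X^N(s)| \le K^N|t-s|^{1/2-\epsilon}$ and $\|K^N\|_{L^{2p}} \le c(\epsilon,p,T)\sqrt{K_p^*}\,N^{-\lambda}$; this step demands the Kolmogorov threshold $\epsilon > 1/(2p)$ so that the relevant dyadic/GRR double integral converges, and $c(\epsilon,p,T)$ blows up as $\epsilon$ approaches that threshold. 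I then set $C := \sup_{N \ge 1} N^{\lambda - \epsilon/2}K^N$ and repeat the union bound: $\mathbb{E}[C^{2p}] \le \sum_{N\ge 1} c(\epsilon,p,T)^{2p}(K_p^*)^p N^{-p\epsilon}$ is finite whenever $p\epsilon > 1$, and $\|C\|_{L^p} \le \|C\|_{L^{2p}}$ delivers the required bound $\bar C(\epsilon,p,T,K_p^*)$.

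The main obstacle is the interplay of three exponents: the BDG order (capped at $2p$ by the hypothesis on $\bar\phi_N^2$), the Kolmogorov threshold for Hölder exponent $1/2-\epsilon$ (requiring $\epsilon > 1/(2p)$), and the summability over $N$ (requiring $p\epsilon > 1$). The slack factor $N^{-\epsilon/2}$ in the target inequality is precisely what makes this last sum converge, and tracking the dependence of $\bar C(\epsilon,p,T,K_p^*)$ on how close $\epsilon$ is to these two thresholds is the fiddly bookkeeping that produces the final constant independent of $\phi_N$.
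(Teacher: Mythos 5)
Your proof is correct and lands on the same endgame as the paper's --- an $\ell^p$-over-$N$ summation that trades the $N^{-\epsilon/2}$ slack for summability --- but it takes a genuinely different route to the per-$N$ H\"older estimate. The paper does not run BDG plus Kolmogorov--Chentsov from scratch; instead it cites a pre-packaged result (\cref{prop:recent}, essentially Fischer--Nappo's moment bounds on the modulus of continuity of It\^o processes) applied to $Y = N^\lambda X^N$ with $\xi = \bar\phi_N^2$ and $\mu = p/2$, which directly yields $\omega_Y(h) \le C_p h^{1/2-\epsilon}$ with $\|C_p\|_{L^p(\Omega)}$ controlled uniformly in $N$ and $\phi_N$. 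Your BDG/Kolmogorov--Chentsov chain is the more elementary unpacking of what such a proposition rests on, and the exponent bookkeeping you carry out (BDG order $2p$, KC threshold $\epsilon>1/(2p)$, summability threshold $p\epsilon>1$) is all correct. After that, the paper introduces $A^N := \sup_{s,t}\|X^N(t)-X^N(s)\|N^{\lambda-\epsilon/2}/|t-s|^{1/2-\epsilon}$ and runs a Chebyshev/Borel--Cantelli argument to conclude $A^N\to 0$ a.s.\ before separately bounding $\|\sup_N A^N\|_{L^p}$ by the $\ell^p$-sum; your version dispenses with the Borel--Cantelli interlude (which is not actually needed for the $L^p$ conclusion) and goes straight to the sum, which is cleaner. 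One caution shared by both arguments: for fixed $p$ and arbitrarily small $\epsilon$ the constraint $p\epsilon>1$ (and your $\epsilon>1/(2p)$) fails; the paper patches this by proving the claim for $p$ large and appealing to Jensen, which implicitly assumes $K_q^*$ is available for all $q$ (true in the paper's applications where $\bar\phi_N$ is uniformly bounded). You flag the constraints but do not close that gap --- worth one sentence saying you first take $p$ large enough and then downgrade by H\"older/Jensen, matching what the paper does.
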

\begin{proof} 
  Note that $\phi_N(s)$ is predictable and $\int_r^t \phi_N(s)^2
  \,ds \le \bar{\phi}^2_N |t-r|$.  In the cases that $i\in
  \{1,\dots,m\}$,  \cref{prop:recent} (with
  $\xi=\bar \phi_N^2$ and $\mu=p/2$) shows that the modulus of continuity
  $\omega_Y$ of $Y \coloneq N^\lambda X^N$  satisfies %
  \[
  \omega_Y(h)%
  = \sup_{t,s\in [0,T],\; |t-s|\le h} \norm{ \vec Y(t)-\vec Y(s)  }%
  \le C_{p} h ^{1/2-\epsilon},\qquad 0< h\le T,
  \]
  for a constant $C_{p}\in L^p(\Omega)$ with
  $\norm{C_p}_{L^p(\Omega)}\le \bar{ C}_p$, where $\bar C_p$
  depends only on $\epsilon$, $p$, $T$, and $K_p^*$. Here
  the constant $C_p$ may depend on $N$ and the bound on
  $\omega_Y$ does not imply \cref{eq:loon}.  For $i=0$, $
  \norm{Y(t)-Y(s)}%
  \le |t-s| \bar \phi_N$, which means $\omega_Y(h)\le C_{p}
  h$ for $C_{p} \coloneq \bar \phi_N$.

  It remains to show that $C_p$ can be chosen uniformly in $N$.
  For the case $i=1,\dots,m$, let 
  \[
  A^N%
   \coloneq \sup_{0\le s,t\le T}%
  \frac{\norm{X^{N}(t)-X^{N}(s)}\,N^{\lambda-\epsilon/2}}%
  {|t-s|^{1/2-\epsilon}}.
  \]
  If $A^N \le C$, then \cref{eq:loon} holds for
  $i=1,\dots,m$.  As $X^N=N^{-\lambda} Y$, we see that
  $A^N=\sup_{0\le s,t\le T}%
  \norm{Y(t)-Y(s)}N^{-\epsilon/2}/|s-t|^{1/2-\epsilon}$ and
  $\mean{\abs{A^N}^p} \le \bar{C}_p^p N^{-\epsilon p/2}$.  For
  any $\delta>0$, Chebyshev's inequality gives
  \[
  \sum_{N=1}^\infty%
  \prob{A^N>\delta}%
  \le \sum_{N=1}^\infty \frac{\bar{C}_{p}^{p}   }%
  {N^{\epsilon p/2}\delta^{p}}.
  \]
  If $p>2/\epsilon$, the sum converges and the
  Borel--Cantelli lemma implies that $A^N\to 0$ almost
  surely. Let $C\coloneq \sup_{N\ge 1} A^N$ and note that
  \[
  \norm{C}_{L^p(\Omega)}%
  \le \pp{\sum_{N=1}^\infty \norm{A^N}^p_{L^p(\Omega)}}^{1/p}%
  \le\bar{C}_p \pp{\sum_{N=1}^\infty
  \frac{1}{N^{\epsilon p}}}^{1/p}<\infty.
  \]
  Thus, $C\in L^p(\Omega)$ for $p$ large and this extends to any
  $p\ge 1$ by Jensen's inequality. We have shown that
  \cref{eq:loon} holds for a constant $C$ independent of
  $N$. A similar argument applies for
  the case $i=0$.
\end{proof} %

We now prove convergence of the {\EM} method with fixed
time-steps. Similar results are given in
\citep{MR1625576,MR2320830} and our result gives more
details about the constants.
\begin{theorem}[fixed time-steps]%
  \label[theorem]{thm:full}%
  Let \cref{ass:4.1,ass:4.2} hold for the SDE~\eqref{eq:sode} and
  let $g_j\in \Cb^2(\real^d,\real^d)$ for $j=0,\dots,m$.  Let $y_n$
  be the fixed time-stepping {\EM} approximation~\eqref{eq:em} at
  uniformly spaced times $t_n=n h$ for $h=T/N $ and some $N>0$,
  with initial condition $y_0\in\real^d$.  Then, for all
  $0<\epsilon<1/2$, there exists a random variable $\fixedconst$
  such that, almost surely,
  \[
  \sup_{n=0,1\dots,N}
  \norm{y_n
    -\vec y(t_n; 0,\vec y_0)}%
  \le \sdeconst(\epsilon,T) \fixedconst
  \frac{1}{N^{1/2-\epsilon}},\qquad N\ge 1,
  \]
  and $\norm{\fixedconst}_{L^p(\Omega)}\le
  \max_j\norm{g_j}^2_{\Cb^2} K(\epsilon, p,T)$ for a constant
  $K(\epsilon,p,T)$ independent of $g_j$ and $N$.
\end{theorem}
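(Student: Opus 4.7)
The strategy is to verify the hypotheses of \Cref{thm:global_alt} and read off the conclusion. Assumptions \ref{ass:4.1} and \ref{ass:4.2} are part of the hypothesis of the theorem, so the whole burden is to establish \Cref{ass:newx} with $\gamma=1/2$ and an $\newxconst$ whose $L^p$ norm has the claimed dependence on $\|g_j\|_{\Cb^2}$. Using the explicit local-truncation-error formula \cref{eq:lte_em,eq:q}, I would write
\[
\vec X_{kn}(\vec y_0)
 = \sum_{i,j=0}^m \int_{\tau_k}^{\tau_n} \psi_{ij}(s)\,dW^j(s),
\qquad
\psi_{ij}(s)
 \coloneq \int_{\shat}^{s} q_{ij}\bigl(\vec y(u;\uhat,\yhat(u))\bigr)\,dW^i(u),
\]
so that each summand is an It\^o integral of an adapted process whose running maximum I can control. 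This reduces the verification to applying \Cref{lemma:p} once per $(i,j)$ to the outer integral, with $\phi_N$ a suitably rescaled version of $\psi_{ij}$.

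\paragraph{Key steps.}
First, I would control $\bar\psi_{ij}\coloneq\sup_{s\in[0,T]}|\psi_{ij}(s)|$ in every $L^p$. On a single subinterval $[\tau_l,\tau_{l+1}]$ the process $\psi_{ij}$ is either a deterministic time integral (if $i=0$) or a Brownian stochastic integral of a bounded integrand $q_{ij}$; since $g_j\in\Cb^2$ implies $\|q_{ij}\|_\infty\lesssim \max_j\|g_j\|_{\Cb^2}^2$, Burkholder--Davis--Gundy gives $\mathbb{E}\sup_{[\tau_l,\tau_{l+1}]}|\psi_{ij}|^{2p}\lesssim h^{p}\,\max_j\|g_j\|_{\Cb^2}^{2p}$. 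Taking a union bound across the $N$ subintervals yields $\|\bar\psi_{ij}\|_{L^{2p}}\lesssim N^{1/(2p)-1/2}\max_j\|g_j\|_{\Cb^2}$. Second, for small $\eta,\epsilon'>0$ I set $\phi_N\coloneq N^{1/2-\eta}\psi_{ij}$ and $\lambda=1/2-\eta$; taking $p$ large enough (depending on $\eta$) makes $\|\bar\phi_N^2\|_{L^p}$ bounded uniformly in $N$, so \Cref{lemma:p} applies and delivers a modulus-of-continuity bound
\[
\Bigl\|\int_{s}^{t}\psi_{ij}(u)\,dW^j(u)\Bigr\|
 \le C\,N^{-1/2+\eta+\epsilon'/2}\,|t-s|^{1/2-\epsilon'}
\]
with $\|C\|_{L^p(\Omega)}\lesssim \max_j\|g_j\|_{\Cb^2}^2\,\bar C(\epsilon',p,T)$. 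Third, since $|\tau_n-\tau_k|\ge h$ whenever $k<n$, I rewrite
\[
|t-s|^{1/2-\epsilon'}
 \le |t-s|^{(1+\epsilon)/2}\,h^{-(\epsilon/2+\epsilon')}
\]
and, using $N=T/h$, collect the powers of $h$. Choosing $\eta=\epsilon'=\epsilon/6$ (for instance) makes the combined $h$-exponent at least $1/2-\epsilon$, which is exactly what \Cref{ass:newx} demands with $\gamma=1/2$. Summing over the finite index set $\{(i,j)\colon 0\le i,j\le m\}$ produces the single random constant $\newxconst$; its $L^p$ norm inherits the factor $\max_j\|g_j\|_{\Cb^2}^2$ and a constant depending only on $\epsilon,p,T$. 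Finally, \Cref{thm:global_alt} gives the stated bound with $\fixedconst\coloneq\newxconst$, and the factor $\sdeconst(\epsilon,T)$ comes directly from that theorem.

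\paragraph{Main obstacle.}
The delicate step is the bookkeeping that converts the essentially optimal H\"older-in-time estimate of \Cref{lemma:p} (exponent $1/2-\epsilon'$) into the sharper bound $(\tau_n-\tau_k)^{(1+\epsilon)/2}h^{1/2-\epsilon}$ required by \Cref{ass:newx}; this exchange depends on the inequality $|\tau_n-\tau_k|\ge h$ for $k<n$, which is only true for non-degenerate time steps and which pins down the role of the \emph{lower} bound on the step size. Getting the exponents $(\eta,\epsilon',p)$ simultaneously right so that (i) $\|\bar\phi_N^2\|_{L^p}$ is $N$-uniform, (ii) the $L^p$ constant from \Cref{lemma:p} is $N$-uniform, and (iii) the final $h$-exponent is at least $1/2-\epsilon$, is the only real calculation; once those are pinned down, the rest is assembly.
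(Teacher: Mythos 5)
Your plan is sound and the overall structure — reduce to verifying \Cref{ass:newx} with $\gamma=1/2$, apply \Cref{lemma:p} to the outer integral, and rebalance the exponents using $|\tau_n-\tau_k|\ge h$ for $k<n$ — is exactly the paper's. Where you genuinely diverge is in how you verify the hypothesis \cref{eq:one_alt} of \Cref{lemma:p} for the outer integral. The paper normalises $\psi_{ij}(s)=\bar g^{-2}\int_0^s q_{ij}\,dW^i$ and invokes \Cref{lemma:p} a \emph{first} time (with $\lambda=0$) to obtain a pathwise H\"older bound on $\psi_{ij}(s)-\psi_{ij}(\shat)$, then uses $|s-\shat|\le h$ to turn that into a supremum estimate uniform in $N$; \Cref{lemma:p} is then applied a \emph{second} time to the outer integral. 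You instead control $\sup_s|\psi_{ij}(s)|$ directly by BDG on each subinterval plus a union bound, and then choose $p$ large so the $N$-dependence in $\|\bar\phi_N^2\|_{L^p}$ disappears. Both work: the paper's double use of \Cref{lemma:p} is slicker and gives $N$-uniformity for free, while your BDG/union-bound route is more elementary and exposes the $p\ge 1/(2\eta)$ constraint explicitly. The extension to all $p\ge 1$ is then handled by Jensen's inequality, as the paper itself remarks at the end of \Cref{lemma:p}.

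One internal inconsistency: since $\|q_{ij}\|_\infty\lesssim\bar g^2$ with $\bar g\coloneq\max_j\|g_j\|_{\Cb^2}$, the BDG estimate should read
$\mathbb E\sup_{[\tau_\ell,\tau_{\ell+1}]}|\psi_{ij}|^{2p}\lesssim h^p\,\bar g^{4p}$ (not $\bar g^{2p}$), giving $\|\bar\psi_{ij}\|_{L^{2p}}\lesssim T^{1/2}N^{1/(2p)-1/2}\bar g^{2}$ and hence $K_p^*\sim\bar g^4$. Combined with the natural homogeneity of \Cref{lemma:p} (its constant $\bar C$ scales like $(K_p^*)^{1/2}$), this yields the claimed $\|\newxconst\|_{L^p(\Omega)}\lesssim\bar g^2 K(\epsilon,p,T)$. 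As written, your intermediate exponents $\bar g^{2p}$ and $\bar g^{1}$ would propagate to $\bar g^{1}$, which would contradict both your own final statement and the theorem; correcting the BDG exponent removes the discrepancy. Everything else — the choice $\eta=\epsilon'=\epsilon/6$, the exponent check $\eta+3\epsilon'/2\le\epsilon/2$, the treatment of the $i=0$ and $j=0$ summands, the final appeal to \Cref{thm:global_alt} — is correct.
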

\begin{proof}
  Fix
  $\epsilon\in(0,1/2)$. From~\cref{eq:lte_em}, \[\delta(t_{n},
  t_{n+1}, y_n)=\sum_{i,j}\int_{t_n}^{t_{n+1}} \int_{t_n}^s
  q_{ij}(y(r;t_n,y_n))\,dW^i(r)\,dW^j(s),\] where $q_{ij}$
  is defined in~\cref{eq:q} and
  \begin{equation}\label{eq:group1}
  \vec X_{kn}(y_0)%
  =\sum_{i,j=0}^m\int_{t_k}^{t_n} %
  \int_{\shat}^s
  q_{ij}(y(r;\rhat,\yhat(r))) \,dW^i(r)
  \,dW^j(s),\qquad%
  \text{$\yhat(r)=y_k$ if $\rhat=t_k$}.
  \end{equation}
  We now establish \cref{ass:newx} in order to apply
  \cref{thm:global_alt}. Thus, we seek $\newxconst$ and $\gamma$ such
  that
  \begin{equation}\label[ineq]{eq:group}
    \norm{\vec X_{kn}(\vec y_0)}%
    \le \newxconst (t_n-t_k)^{(1+\epsilon)/2} h^{\gamma-\epsilon}.
  \end{equation}
  Let $\bar g\coloneq \max_j \norm{g_j}_{\Cb^2}$ and
  \[
  \psi_{ij}(s)%
   \coloneq \frac{1}{\bar g^2}\int_{0}^s q_{ij}\pp{r;\rhat,\yhat(r)}\,dW^i(r),\qquad 0\le s\le T,
  \]
  so that $X_{kn}(y_0)=\bar g^2\sum_{i,j=0}^m
  \int_{t_k}^{t_n}
  \pp{\psi_{ij}(s)-\psi_{ij}(\shat)}\,dW^j(s)$.  Now
  $\psi_{ij}(s)$ is a predictable process and
  $\norm{q_{ij}}$ is bounded by $\bar g^2$. Then
  \cref{lemma:p} applies with $\epsilon\mapsto \epsilon/12$,
  $\lambda=0$, and $X^N=\psi_{ij}$, so that there exists
  $C^1>0$ such that
  \begin{align}\label[ineq]{star}
  \sup_{0\le s \le T}%
  \norm{ \psi_{ij}(s)-\psi_{ij}(\shat)}%
  \le C^1 N^{-\epsilon/24}\times
  \begin{cases}
    |s-\shat|,& i=0,    \\[0.9em]
    |s-\shat|^{1/2-\epsilon/12},& i=1,\dots,m.
  \end{cases}
  \end{align}
 Further, $C^1\in L^p(\Omega)$ and $\norm{C^1}_{L^p(\Omega)}\le \bar
  C^1$ for a constant $\bar C^1$ independent of $g_j$.
  
  As $|s-\shat| \le h=T/N$, \cref{star} implies that
  \cref{eq:one_alt} holds with $\phi_N(s)=
  (\psi(s)-\psi(\shat))N^{\lambda}$ and
  $\lambda=1/2-\epsilon/8$ for $i=1,\dots,m$, and also for
  $i=0$ as $1-\epsilon/24>1/2-\epsilon/8$ (as
  $\epsilon\in(0,1/2)$).  As $\shat<s$, $\phi_N(s)$ is again a
  predictable process. Applying \cref{lemma:p} once more
  with $\epsilon\mapsto \epsilon/4$, we find a $ C^2\in
  L^p(\Omega)$ such that for $N\in\naturals$
  \[
  \norm{
  \int_{t_k}^{t_n} \pp{\psi_{ij}(s)-\psi_{ij}(\shat)} \,dW^j(s)}
  \le C^2\frac{1}{N^{\lambda-\epsilon/8}}\times
  \begin{cases}
      |t_n-t_k|,& j=0,\\[0.5em]
      |t_n-t_k|^{1/2-\epsilon/4}, & j=1,\dots,m.\\
  \end{cases}
  \]
  Then, summing over $i,j=0,\dots,m$, we find $\newxconst\in L^p(\Omega)$ such that
  \[
  \norm{\vec X_{kn}(\vec y_0)}%
  \le   \newxconst h^{1/2-\epsilon/4} |t_n-t_k|^{1/2-\epsilon/4}.
  \]
  Further $\newxconst=\bar g^2 K(\epsilon,p)$ for a random
  variable $K(\epsilon,p)$ such that
  $\norm{K(\epsilon,p)}_{L^p(\Omega)}\le \Kbar(\epsilon,p,T)$ for
  a $\Kbar (\epsilon,p,T)$ independent of $g_j$.

  Finally, $X_{nn}=0$ and $h\le |t_n-t_k|$ for $n\ne k$, so that
  \[
  \norm{\vec X_{kn}(\vec y_0)}%
  \le \newxconst h^{1/2-\epsilon} |t_n-t_k|^{(1+\epsilon)/2}.
  \]
  This gives the required bound in~\cref{eq:group} with
  $\gamma=1/2$.  Thus, we have found a constant $\newxconst\in
  L^p(\Omega)$ such that~\cref{eq:group} holds.
  \cref{thm:global_alt} now applies to complete the proof.
\end{proof}

\section{Adaptive time-stepping with bounded diffusions}
\label{sec:bd}
To demonstrate the theory for random times, we introduce an
adaptive time-stepping method based on the method of bounded
diffusions~\citep{MR1722281}.  For {\EM}, the local truncation error
\[
\delta(\tau_n, \tau_{n+1}, y_n)%
=\sum_{i,j=0}^m \int_{\tau_n}^{\tau_{n+1}}%
\int_{\tau_n}^s%
q_{ij}(y(r;\tau_n,y_n)) \, dW^i(r) \,dW^j(s)
\]
and we control this error by selecting the time step
$\tau_{n+1}-\tau_n$ as follows. First, fix $\alpha>0$ as a
parameter and denote the maximum time-step by $h=T/N$ for a
discretisation parameter $N\in \naturals$. Suppose that $y_n$ is
a given approximation at a stopping time $\tau_n$.  We consider
two schemes for choosing $\tau_{n+1}$.
\begin{description}
\item[{Adaptive-I}] Choose $\tau_{n+1}$ to be the largest $t\in
  [\tau_n,\tau_n+h]\cap [0,T]$ such that for $i=1,\dots,m$
  \begin{align}
    \max_{j=1,\dots,m}\norm{q_{ij}(y_n)}^{1/2}%
    \,\abs{W^i(t)-W^i(\tau_n)}%
    &\le \alpha h^{1/2}.\label[ineq]{eq:bdc}
  \end{align}
  
\item[{Adaptive-II}] Choose $\tau_{n+1}$ to be the largest $t\in
  [\tau_n,\tau_n+h]\cap [0,T]$ such that  \begin{equation}\label[ineq]{eq:bdalt}
    \max_{i,j=1,\dots,m} \norm{q_{ij}(y_n) }%
    \abs{ \int_{\tau_n}^{\tau_{n+1}} \int_{\tau_n}^s
      dW^i(r)\,dW^j(s)} %
    \le \frac12\alpha^2 h.
  \end{equation}
\end{description}
Notice that $\tau_{n+1}$ is also a stopping time.  We define the
next approximation $y_{n+1}$ at time $\tau_{n+1}$
by~\cref{eq:em}. Given $y_0$ and $\tau_0=0$, this rule defines an
approximation $y_n$ at stopping times $\tau_n$ for all
$n=0,\dots,N$ where $\tau_N= T$. That is,
$(\tau_0,\dots,\tau_N)\in \mathcal{T}_{\operatorname{stop}}$.
The first method of choosing the time step $\tau_{n+1}-\tau_n$ is
equivalent to finding the first exit time $t$ of
$(t,W^1(t),\dots,W^m(t))$ from a cuboid
$[0,a_0]\times[-a_1,a_1]\times\dots\times[-a_m,a_m]$ with
\[
a_0%
\coloneq\min\Bp{h, T-\tau_n}, \qquad%
a_i%
\coloneq \min \Bp{h^{1/2}\frac{\alpha}{
    {\norm{q_{ij}(y_n)}^{1/2}}}\colon j=1,\dots,m},\quad i=1,\dots,m.
\]
\citet{MR1722281} give an algorithm for sampling a time step from
this distribution, which is used for the experiments in
\cref{ss:exps}.

For Adaptive-II, we replace~\cref{eq:bdc} with a term involving a
double stochastic integral to get~\cref{eq:bdalt}.  In the case
of diagonal noise,~\cref{eq:bdalt} simplifies to
\begin{equation}\label[ineq]{eq:rob}
     \norm{q_{jj}(y_n) }%
\abs{
  \pp{W^j(\tau_{n+1})-W^j(\tau_n)}^2-\pp{\tau_{n+1}-\tau_n}}
\le \alpha^2 h.
\end{equation}
In the case $\alpha^2>\norm{q_{jj}}$, this condition holds
automatically if~\cref{eq:bdc} holds and allows for longer time-steps to be taken.  See \cref{fig:1}.
 In general, it is
not clear how to sample such a time step and an approximate
method is utilised in \cref{ss:exps} for an example in  $m=1$ dimensions.

\begin{figure}
  \centering
  \includegraphics{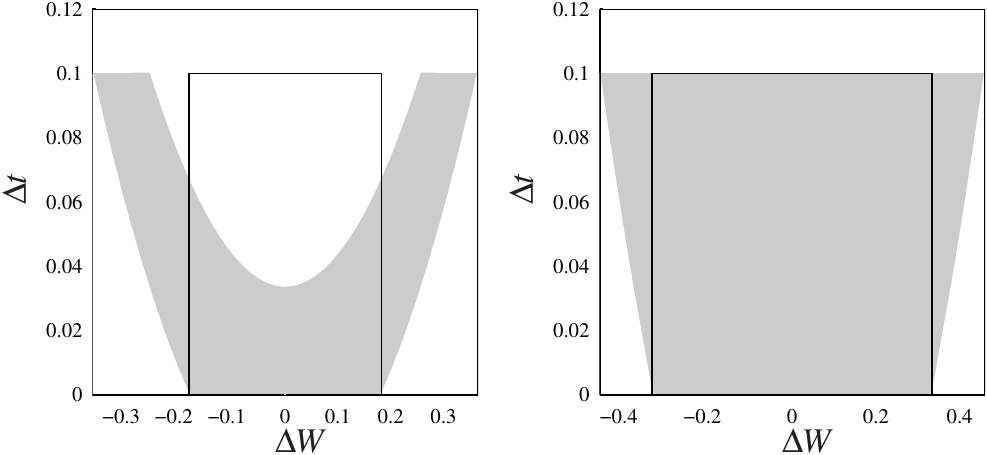} 
  \caption{The plots show the regions in $(\Delta t,\Delta
    W)=(\tau_{n+1}-\tau_n, W^j(\tau_{n+1})-W^j(\tau_n))$ where
    $\tau_{n+1}-\tau_n\in[0,h]$ and \cref{eq:bdc} holds
    (rectangular box) and \cref{eq:rob} holds (grey region)
    hold for $\alpha=1$, $h=0.1$ and (left)
    $\norm{q_{jj}(y_n)}=3$ where $\alpha^2<\norm{q_{jj}(y_n)}$
    and (right) $\norm{q_{jj}(y_n)}=0.9$ where
    $\alpha^2>\norm{q_{jj}(y_n)}$.}\label{fig:1}
\end{figure}

\subsection{Pathwise convergence for Adaptive-I}
There are two parts to the proof of convergence: first, assuming
smoothness of $g_j$, we establish that the time steps are not too
small in \cref{lemma:control} and then, in \cref{thm:ada}, we show
the conditions of \cref{thm:global_alt} hold.

\begin{lemma}%
  \label[lemma]{lemma:control}%
  Suppose that $g_j\in \Cb^2(\real^d,\real^d)$ for
  $j=0,\dots,m$. Choose $(\tau_0,\dots,\tau_N)\in {\cal T}$ such
  that~\eqref{eq:bdc} holds for parameters $\alpha,h>0$.  For all
  $0<\delta< 1$, there exists a random variable $C_\delta>0$
  (independent of $g_j$ and $y_0$ and $\alpha$) such that
  \begin{equation}\label[ineq]{eq:hitme2}
  h%
  \le \KBI  |\tau_{n+1}-\tau_n|^{1-\delta}\qquad\text{for
    $n=0,\dots,N-2$,}
  \end{equation}
  where $ \KBI  \coloneq  
  T^{\delta}+ {C_\delta^2 \max_j\norm{g_{j}}_{\Cb^2}^2} /
  {\alpha^2}$.
\end{lemma}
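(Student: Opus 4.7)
The plan is to split according to whether the Adaptive-I rule terminates the $n$-th step at the maximum allowed length $h$ (or the endpoint $T$) or because the Brownian boundary condition~\eqref{eq:bdc} becomes an equality. The restriction $n\le N-2$ is important here: it guarantees $\tau_{n+1}<T$, so the only way the interval $[\tau_n,\tau_n+h]\cap[0,T]$ can fail to be fully taken is that the $\alpha h^{1/2}$ threshold is hit for some $i\in\{1,\ldots,m\}$.

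\smallskip

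\noindent\textbf{Case A (full step).} If $\tau_{n+1}-\tau_n=h$, then trivially
\[
h=|\tau_{n+1}-\tau_n|\le T^{\delta}\,|\tau_{n+1}-\tau_n|^{1-\delta},
\]
since $|\tau_{n+1}-\tau_n|\le T$. This is already consistent with the claimed bound.

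\smallskip

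\noindent\textbf{Case B (boundary hit).} If $\tau_{n+1}-\tau_n<h$, then by maximality of $\tau_{n+1}$ in the Adaptive-I definition and continuity of Brownian paths, there exist $i,j\in\{1,\ldots,m\}$ such that equality holds in~\eqref{eq:bdc}, i.e.
\[
\|q_{ij}(y_n)\|^{1/2}\,|W^i(\tau_{n+1})-W^i(\tau_n)|=\alpha h^{1/2}.
\]
Now I invoke the standard Hölder regularity of Brownian motion: for each $\delta\in(0,1)$ there is an almost-surely finite random variable $C_\delta>0$ (depending only on the paths $W^1,\dots,W^m$, so in particular independent of $g_j$, $y_0$, and $\alpha$) such that
\[
|W^i(t)-W^i(s)|\le C_\delta\,|t-s|^{(1-\delta)/2},\qquad 0\le s\le t\le T,\ i=1,\dots,m.
\]
Because $i\ne 0$, the formula~\eqref{eq:q} gives $q_{ij}(y)=Dg_j(y)g_i(y)$, whence $\|q_{ij}(y_n)\|\le\bar g^2$ with $\bar g\coloneq\max_j\|g_j\|_{\Cb^2}$. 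Substituting, squaring, and rearranging yields
\[
h\le\frac{C_\delta^2\,\bar g^2}{\alpha^2}\,|\tau_{n+1}-\tau_n|^{1-\delta}.
\]

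\smallskip

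Combining the two cases gives $h\le\KBI\,|\tau_{n+1}-\tau_n|^{1-\delta}$ as required. The only non-routine point is justifying that $C_\delta$ can be chosen pathwise and independently of the diffusion coefficients and the step size; this is the standard Kolmogorov--Čentsov / local Hölder regularity for Brownian motion, applied simultaneously to the finitely many components $W^1,\dots,W^m$. Everything else is an equality-in-the-boundary argument driven by maximality of $\tau_{n+1}$ and a $\Cb^2$ bound on $q_{ij}$ for $i\ge 1$.
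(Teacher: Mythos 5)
Your proposal is correct and follows essentially the same route as the paper: the same dichotomy (full step of length $h$ versus an equality hit in~\eqref{eq:bdc}), the same pathwise H\"older bound $|W^i(t)-W^i(s)|\le C_\delta|t-s|^{(1-\delta)/2}$, and the same rearrangement using $\|q_{ij}\|\le\bar g^2$. The only cosmetic difference is that you insert the bound $\|q_{ij}(y_n)\|\le\bar g^2$ before rearranging rather than after, which changes nothing.
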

\begin{proof} Choose  a random variable
  $C_\delta$ such that $|W^i(t)-W^i(s)| \le C_\delta
  |t-s|^{(1-\delta)/2}$ for $i=1,\dots,m$ and $0\le s,t\le T$.

  If $\tau_{n+1}-\tau_n=h$, we have $h\le T^\delta
  |\tau_{n+1}-\tau_n|^{1-\delta}$ and \cref{eq:hitme2}
  holds. Alternatively, $\tau_{n+1}-\tau_n<h$ and, as $n\le N-2$,
  we know that $\tau_{n+1}<T$. In this case, we must have that
  $\tau_{n+1}$ satisfies~\cref{eq:bdc} with equality at
  some specific $i,j=1,\dots,m$. That is,
  \[
  \norm{q_{ij}(y_n)}^{1/2}%
  \abs{    {W^i(\tau_{n+1})-W^i(\tau_n)} }%
  =\alpha h^{1/2}.
  \]
  Then,
  \[
  C_\delta |\tau_{n+1}-\tau_n|^{(1-\delta)/2}%
  \ge \abs{W^i(\tau_{n+1})-W^i(\tau_n)} %
  = \frac{\alpha h^{1/2}}{\norm{q_{ij}(y_n)}^{1/2} }.
  \]
  In other words,
  \[
  h \le { \frac{C_\delta^2
      \norm{q_{ij}(y_n)}}{\alpha^2}}%
  |\tau_{n+1}-\tau_n|^{1-\delta}%
  \le \KBI|\tau_{n+1}-\tau_n|^{1-\delta}
  \]
for the given $\KBI$, as required.
\end{proof}

The next theorem describes an error bound for Adaptive-I. To
leading order, the constant $K_{\mathrm A}$ in the error bound
scales like $\bar g^{1+\epsilon}$ for $\bar g=\max_j
\norm{g_j}_{\Cb^2}$, whilst the corresponding constant
$K_{\mathrm F}$ in \cref{thm:full} scales like $\bar g^2$. This
is a  more favourable scaling of the error estimate and says
the error bound scales nearly linearly with the magnitude of the
vector fields $g_j$.

\begin{theorem}[adaptive-I]%
  \label[theorem]{thm:ada}
  Let \cref{ass:4.1,ass:4.2} hold for the SDE~\eqref{eq:sode} and
  suppose that $g_j\in \Cb^3(\real^d,\real^d)$.  Let $y_n$ denote
  the {\EM} approximation~\eqref{eq:em} at times $\tau_n$ given by
  Adaptive-I with $h=T/N$, some $N\in\naturals$. Then, for
  $0<\epsilon<1/2$ and $y_0\in\real^d$, there exists a random
  variable $\adapconst>0$ such that, almost surely,
  \[
  \sup_{0\le \tau_n\le T}
  \norm{y_n%
    -\vec y(\tau_n; 0,\vec y_0)}%
  \le \sdeconst(\epsilon,T)\, \adapconst 
  \frac{1}{N^{1/2-\epsilon}},\qquad%
  N\ge 1,
  \]
  where $\norm{\adapconst}_{L^p(\Omega)}\le K_1(\epsilon, p,
  \alpha,T) \max _j\norm{g_j}_{\Cb^2}^{1+2\epsilon}+K_2(\epsilon,
  p, \norm{g_j}_{\Cb^3}, \alpha, T)/N^{1/2}$ for some
  $K_1(\epsilon,p,\alpha,T)$ independent of $g_j$ and $N$ and
  $K_2(\epsilon, p, \norm{g_j}_{\Cb^3}, \alpha, T)$ independent
  of $N$.
\end{theorem}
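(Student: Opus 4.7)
The plan is to verify \cref{ass:newx} for the {\EM} method with the Adaptive-I rule and then invoke \cref{thm:global_alt}. Since each $\tau_n$ is a stopping time, $(\tau_0,\dots,\tau_N)\in\mathcal{T}_{\operatorname{stop}}$, and \cref{ass:4.1,ass:4.2} are supplied by the hypothesis $g_j\in\Cb^3$. The target rate is $\gamma=1/2$.

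Mirroring the derivation in the proof of \cref{thm:full}, write the truncation-error sum as
\[
X_{kn}(y_0)=\sum_{i,j=0}^m\int_{\tau_k}^{\tau_n}\phi_{ij}(s)\,dW^j(s),\qquad
\phi_{ij}(s)\coloneq \int_{\shat}^{s} q_{ij}(y(r;\rhat,\yhat(r)))\,dW^i(r).
\]
The essential new ingredient, absent in the fixed time-step analysis, is the pathwise control of $\phi_{ij}$ afforded by \cref{eq:bdc}. Decompose
\[
\phi_{ij}(s)= q_{ij}(\yhat(s))\pp{W^i(s)-W^i(\shat)}+R_{ij}(s).
\]
For $i\ge 1$, the Adaptive-I rule gives $\norm{q_{ij}(\yhat(s))}^{1/2}\,\abs{W^i(s)-W^i(\shat)}\le\alpha h^{1/2}$, and together with $\norm{q_{ij}}^{1/2}\le\bar g\coloneq \max_j\norm{g_j}_{\Cb^2}$ this yields the pathwise estimate $\norm{q_{ij}(\yhat(s))(W^i(s)-W^i(\shat))}\le\alpha\bar g\,h^{1/2}$. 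The leading piece of $\phi_{ij}$ therefore scales \emph{linearly} in $\bar g$ rather than quadratically, which is the source of the improved constant. The remainder $R_{ij}(s)$ is handled by applying \Ito's formula to $q_{ij}(y(r))-q_{ij}(\yhat(s))$; it contributes an additional factor of $\sqrt{h}$ with a coefficient controlled by $\norm{g_j}_{\Cb^3}$ and produces the $K_2/N^{1/2}$ correction term.

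Next, feed the $L^p$-uniform bound on $\phi_{ij}$ into \cref{lemma:p} for the outer stochastic integral (with the rescaling used in the proof of \cref{thm:full}), so that $K^*_p$ in \cref{eq:one_alt} scales as $(\alpha\bar g)^2$ plus a lower-order remainder involving $\norm{g_j}_{\Cb^3}^2 h$. This yields a bound of the form $C\alpha\bar g\,h^{1/2-\epsilon/4}\abs{\tau_n-\tau_k}^{1/2-\epsilon/4}$ plus the stated remainder. Because the steps are non-uniform, apply \cref{lemma:control} with a small $\delta=O(\epsilon)$ to trade $h^{1/2-\epsilon/4}$ for $\abs{\tau_n-\tau_k}^{(1+\epsilon)/2}h^{\gamma-\epsilon}$ with $\gamma=1/2$, thereby verifying \cref{eq:2x}. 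The main obstacle is tracking the $\bar g$ scaling through \cref{lemma:control}: the constant $\KBI$ carries a $\bar g^2/\alpha^2$ factor, but since $\KBI$ enters only to the small power $O(\epsilon)$, the penalty is merely $\bar g^{O(\epsilon)}$, which is consistent with the claimed $\bar g^{1+2\epsilon}$ scaling of $K_1$. Once \cref{ass:newx} is established with $\newxconst=K_1\bar g^{1+2\epsilon}+K_2/\sqrt{N}$ in $L^p(\Omega)$, the conclusion follows directly from \cref{thm:global_alt}.
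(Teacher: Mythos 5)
Your plan matches the paper's proof essentially step for step: perform the same Itô expansion of $X_{kn}$ isolating the ``frozen'' leading term $\sum_{i,j\ge1}\int\int q_{ij}(\yhat(r))\,dW^i\,dW^j$; observe that \cref{eq:bdc} bounds the inner integral pathwise by $\alpha\bar g h^{1/2}$ (linear rather than quadratic in $\bar g$); feed this into \cref{lemma:p}; use \cref{lemma:control} to convert the $h$-exponent into a $|\tau_n-\tau_k|$-exponent so that \cref{eq:2x} holds with $\gamma=1/2$; collect the remainder into the $K_2/N^{1/2}$ term and invoke \cref{thm:global_alt}. The $\bar g^{1+O(\epsilon)}$ scaling is tracked correctly through $\KBI\sim\bar g^2/\alpha^2$ raised to a small power.

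One arithmetic slip worth fixing: you claim \cref{lemma:p} yields $h^{1/2-\epsilon/4}|\tau_n-\tau_k|^{1/2-\epsilon/4}$ and that this can be converted to $h^{1/2-\epsilon}|\tau_n-\tau_k|^{(1+\epsilon)/2}$ via \cref{lemma:control} with $\delta=O(\epsilon)$. That requires trading $h^{3\epsilon/4}$ for $|\tau_n-\tau_k|^{3\epsilon/4}$, i.e.\ $h\lesssim|\tau_n-\tau_k|$, which is strictly stronger than what \cref{lemma:control} gives ($h\le\KBI|\tau_n-\tau_k|^{1-\delta}$ with $\delta>0$). You need slack: the paper keeps an $h^{1/2-\epsilon/8}$ factor (so the surplus power is $7\epsilon/8$ rather than $3\epsilon/4$) and takes $\delta=1/7$, for which $(1-\delta)\cdot7\epsilon/8=3\epsilon/4$ exactly closes the gap. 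Choosing a smaller inner $\epsilon$ in the \cref{lemma:p} application (rather than a smaller $\delta$ in \cref{lemma:control}) is the repair; otherwise the approach is sound and identical to the paper's.
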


\begin{proof} 
 We have as in \cref{eq:group1}
 \begin{equation*}
   \vec X_{kn}(y_0)%
   =\sum_{i,j=0}^m\int_{t_k}^{t_n} %
   \int_{\shat}^s
   q_{ij}\pp{y(r;\rhat,\yhat(r))} \,dW^i(r)
   \,dW^j(s),\qquad %
   \text{$\yhat(r)=y_k$ if $\rhat=\tau_k$}.
  \end{equation*}
  Expanding the terms $i,j=1,\dots,m$ again using \Ito's
  formula
  \begin{gather}
    \begin{split}
    \vec X_{kn}(y_0)%
    &=\sum_{i,j=1}^m\int_{t_k}^{t_n} %
    \int_{\shat}^s q_{ij}\pp{y(\rhat;\rhat,\yhat(r))}
    \,dW^i(r)
    \,dW^j(s)\\
    &\quad+\sum_{j=0}^m\int_{t_k}^{t_n} %
    \int_{\shat}^s
    q_{0j}\pp{y( r;\rhat,\yhat(r))} \,dr \,dW^j(s)\\
    &\quad+\sum_{i=1}^m\int_{t_k}^{t_n} %
    \int_{\shat}^s q_{i0}\pp{y( r;\rhat,\yhat(r))}
    \,dW^i(r)
    \,ds\\
    &\quad+ \sum_{i,j=1,\ell=0}^m\int_{t_k}^{t_n} %
    \int_{\shat}^s \int_{\shat}^r q_{\ell ij}\pp{y(
      u;\uhat,\yhat(u))}\,dW^\ell(u) \,dW^i(r) \,dW^j(s),  
    \end{split}\label{eq:quit}
  \end{gather}
  where $q_{\ell i j}\coloneq\mathcal{L}^\ell q_{ij}$ for $\mathcal{L}^0
  \phi\coloneq D \phi g_0+\frac 12 \sum_{j=1}^m D^2 \phi (g_j,g_j)$ and
  $\mathcal{L}^\ell\phi\coloneq D \phi g_\ell$. 
  Fix $\epsilon\in(0,1/2)$. Following the same arguments as
  in the proof of \cref{thm:full}, the last three terms here
  are bounded by $\newxconst^2 h^{1-\epsilon}
  |t_n-t_k|^{(1+\epsilon)/2}$ for a constant $\newxconst^2$
  that satisfies $\norm{\newxconst^2}_{L^p(\Omega)}\le
  K\pp{\epsilon,\alpha,p,\norm{g_j}_{\Cb^3}}$ for some
  $K\pp{\epsilon,\alpha,p,\norm{g_j}_{\Cb^3}}$ independent
  of $N$.
  
  To bound the first term, we use a more refined argument. Let
  $\bar g \coloneq \max_j \norm{g_j}_{\Cb^2}$ and
  \[
  \phi_N(s)%
   \coloneq \frac1{\bar g h^{1/2}}%
  \int_{\shat}^s q_{ij}\pp{y(\rhat; \rhat,
    \yhat(r))}\,dW^i(r)%
   = \frac1{\bar g h^{1/2}}%
  \int_{\shat}^s q_{ij}\pp{ \yhat(r)}\,dW^i(r)
  \]
  for $\tau_k=\rhat\le r<\tau_{k+1}$.  Here $\phi_N(s)$ is
  continuous and adapted (as $\shat$ is a stopping time and
  $\shat<s$) and hence predictable.  By~\cref{eq:bdc},
  \[
  \norm{q_{ij}\pp{y(\rhat; \rhat, \yhat(r))} }^{1/2}\,\abs{W^i(\tau_{n+1})-W^i(\tau_n)} %
  \le \alpha h^{1/2},\qquad j=1,\dots,m.
  \]
As $\norm{q_{ij}}\le \bar g^2$, this gives 
  \[
  \sup_{0\le s \le T} \norm{ \phi_N(s)}^2%
  = \frac{1}{\bar g^2 h} \sup_{0\le s\le T}%
  \norm{\int_{\shat}^s%
    {q_{ij}\pp{y(\rhat; \rhat, \yhat(r))} }\,dW^i(r)}^2 %
  \le \alpha^2.
  \]
  Hence, \cref{eq:one_alt} holds and \cref{lemma:p} applies
  with $\lambda=1/2$ and \[X^N(t)=h^{1/2}\int_0^t
  \phi_N(s)\,dW^i(s).\] This gives a $c_{ij}\in
  L^p(\Omega)$ such that
  \[
  \frac{1}{\bar g}%
  \norm{ \int_{\tau_k}^{\tau_n}%
    \int_{\shat}^{s}%
    q_{ij}\pp{y(\rhat;\rhat, \yhat(r))}
    \,dW^i(r)\,dW^j(s)}%
  \le c_{ij} \,{|\tau_n-\tau_k|^{1/2-\epsilon/4}}\,{h^{1/2-\epsilon/8}}.
  \]
  We may choose $c_{ij}$ so that its $L^p(\Omega)$ norm is
  independent of $h$ and $\bar g$.  This  provides the
  necessary bound on the first term of \cref{eq:quit}. 

  Taken all the terms in \cref{eq:quit} together, we can
  find $\newxconst^1$ such that
  \[
  X_{kn}(y_0)%
  \le \newxconst^1 {|\tau_n-\tau_k|^{1/2-\epsilon/4}}{h^{1/2-\epsilon/8}}%
  + \newxconst^2 {}{h^{1-\epsilon}} |\tau_n-\tau_k|^{(1+\epsilon)/2}.
  \]
  Here $\norm{\newxconst^1}_{L^p(\Omega)}\le \bar g
  K(\epsilon,p,\alpha,T)$ for some $K(\epsilon,p,\alpha,T)$
  independent of $N$ and $g_j$.

  Using Lemma~\ref{lemma:control} with $\delta=1/7$, we have
  the lower bound $h \le \KBI|\tau_n-\tau_k|^{1-\delta}=\KBI
  \abs{\tau_n-\tau_k}^{6/7}$ on the time step for
  $k,n=0,\dots,N-1$. Hence,
  \begin{align*}
    |\tau_n-\tau_k|^{1/2-\epsilon/4}%
    h^{1/2-\epsilon/8}%
    &\le |\tau_n-\tau_k|^{1/2-\epsilon/4} {h^{1/2-\epsilon}}
    {h^{7\epsilon/8}}\\
    &\le |\tau_n-\tau_k|^{1/2-\epsilon/4} {h^{1/2-\epsilon}}
    \,\KBI^{7\epsilon/8}\,
    |\tau_n-\tau_k|^{\epsilon 3 /4}\\
    &= |\tau_n-\tau_k|^{1/2+\epsilon/2}%
    \,{h^{1/2-\epsilon}}\KBI^{7\epsilon/8}. %
  \end{align*}
  Consequently,
  \[
  \norm{X_{kn}(y_0)}%
  \le \newxconst^1 \KBI^\epsilon 
  {|\tau_n-\tau_k|^{(1+\epsilon)/2}} {h^{1/2-\epsilon}} +
  \newxconst^2 |\tau_n-\tau_k|^{(1+\epsilon)/2} h^{1-\epsilon}.
  \] 
  We have all the conditions of \cref{thm:global_alt}, which
  gives the desired error bound for $n=0,\dots,N-1$. The final
  step from $\tau_{N-1}$ to $\tau_N$ may equal $|T-\tau_{N-1}|$,
  which may be very small and does not yield to
  \cref{lemma:control}. However, the error resulting from this
  step can be added into the $\newxconst^2$ term by utilising the
  bound in \cref{eq:3}.
\end{proof}

\subsection{Analysis of Adaptive-II}
The convergence result (\cref{thm:adb} below) for {\EM} with
Adaptive-II is similar to \cref{thm:ada} and the new time-stepping strategy behaves like Adaptive-I. However, the
method of proof is different and we will make use of Azuma's
inequality. First, we show the time step does not become too
small, similarly to \cref{lemma:control}.
\begin{lemma}%
  \label[lemma]{lemma:control2}%
  Suppose that $g_j\in \Cb^2(\real^d,\real^d)$ for
  $j=0,\dots,m$. Choose $(\tau_0,\dots,\tau_{N})\in\mathcal{T}$
  such that \cref{eq:bdalt} holds for parameters $\alpha,h>0$.
  For $0<\delta< 1$, there exists a random variable $C_\delta>0$
  (independent of $\alpha$, $h$, $g_j$, and $y_0$) such that
  \begin{equation}%
    \label[ineq]{eq:hitme3}%
    h%
    \le \KBI |\tau_{k+1}-\tau_k|^{1-\delta},\quad%
    \text{for $k=0,\dots,N-2$,}
  \end{equation}
  where $ \KBI=T^\delta+{ {2 C_\delta
      \max_j\norm{g_{j}}_{\Cb^2}^2 / \alpha^2}}$.
\end{lemma}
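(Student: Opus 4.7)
The plan is to mirror the proof of \cref{lemma:control}, but to replace the pathwise H\"older bound on Brownian increments with a pathwise H\"older bound on the iterated It\^o integrals that appear in \cref{eq:bdalt}. Concretely, I would first establish that for each $0<\delta<1$ there exists an almost surely finite random variable $C_\delta>0$ (depending only on the driving Brownian motions, hence independent of $\alpha$, $h$, $g_j$, $y_0$) such that
\[
\left|\int_s^t \int_s^r dW^i(u)\,dW^j(r)\right|
\le C_\delta\,(t-s)^{1-\delta},\qquad 0\le s<t\le T,\; i,j\in\{1,\dots,m\}.
\]
For $i\ne j$, this follows by applying the Burkholder--Davis--Gundy inequality to the martingale $r\mapsto\int_s^r (W^i(u)-W^i(s))\,dW^j(u)$; a short computation yields an $L^{2p}$-bound of order $(t-s)^{2p}$ for each $p\ge 1$, and Kolmogorov's continuity criterion then produces the desired H\"older exponent $1-\delta$. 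For $i=j$, the explicit identity $\int_s^t\int_s^r dW^i\,dW^i=\tfrac12((W^i(t)-W^i(s))^2-(t-s))$ combined with the standard H\"older continuity of $W^i$ yields the same bound. Taking the maximum over the finitely many pairs $(i,j)$ collapses to a single random constant $C_\delta$.

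With $C_\delta$ in hand, the argument splits into the same two cases as in \cref{lemma:control}. If $\tau_{k+1}-\tau_k=h$, then $h=|\tau_{k+1}-\tau_k|\le T^{\delta}|\tau_{k+1}-\tau_k|^{1-\delta}$, so \cref{eq:hitme3} follows from the first term of $\KBI$. Otherwise $\tau_{k+1}-\tau_k<h$ and, since $k\le N-2$ implies $\tau_{k+1}<T$, the stopping rule forces equality in \cref{eq:bdalt} at some indices $i,j\in\{1,\dots,m\}$:
\[
\norm{q_{ij}(y_k)}\,\abs{\int_{\tau_k}^{\tau_{k+1}}\int_{\tau_k}^s dW^i(r)\,dW^j(s)}
=\tfrac12\alpha^2 h.
\]
From \cref{eq:q}, one has $\norm{q_{ij}(y_k)}\le \bar g^{\,2}$ where $\bar g\coloneq\max_j\norm{g_j}_{\Cb^2}$, and applying the H\"older bound on the iterated integral gives
\[
\tfrac12\alpha^2 h \le \bar g^{\,2}\,C_\delta\,|\tau_{k+1}-\tau_k|^{1-\delta},
\]
which rearranges to $h\le (2 C_\delta\bar g^{\,2}/\alpha^2)\,|\tau_{k+1}-\tau_k|^{1-\delta}$, delivering the second term of $\KBI$.

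The main obstacle is the first step: producing a single pathwise random constant $C_\delta$ that controls the iterated integrals uniformly in $(s,t)$ and in the pair $(i,j)$. The inequality \cref{eq:bdalt} features a L\'evy-area-type quantity rather than a Brownian increment, so the simple bound used in \cref{lemma:control} does not transfer directly; however, the BDG$+$Kolmogorov argument sketched above is standard (it is essentially what underlies the rough path lift of Brownian motion). Once this uniform H\"older bound is established, the remaining case analysis is a direct translation of the proof of \cref{lemma:control}, with $\alpha$ replaced by $\alpha^2/2$ to match the right-hand side of \cref{eq:bdalt}.
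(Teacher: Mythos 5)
Your proposal is correct and follows essentially the same route as the paper's proof: posit a pathwise H\"older constant $C_\delta$ for the iterated integrals $\int_s^t\int_s^r dW^i\,dW^j$, then split into the two cases $\tau_{k+1}-\tau_k=h$ and $\tau_{k+1}-\tau_k<h$ (using equality in \cref{eq:bdalt} together with $\norm{q_{ij}(y_k)}\le\max_j\norm{g_j}_{\Cb^2}^2$ in the latter). The only difference is that you sketch a BDG--Kolmogorov justification for the existence of $C_\delta$, whereas the paper simply asserts it; the case analysis and the resulting form of $\KBI$ are identical.
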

\begin{proof}
  Choose a random variable $C_\delta$ such that
  $\abs{\int_s^t W^i(r)\,dW^j(r)}%
  \le C_\delta |t-s|^{1-\delta}$ for $i,j=1,\dots,m$ and
  $0\le s,t\le T$.

  If $\tau_{k+1}-\tau_k=h$, then $h\le T^\delta
  |\tau_{k+1}-\tau_k|^{1-\delta}$ and \cref{eq:hitme3}
  holds. Otherwise, we must have $\tau_{k+1}-\tau_k<h$ and
  $\tau_{k+1}<T$. Then $\tau_{k+1}$ satisfies~\cref{eq:bdalt} with
  equality for some $i,j=1,\dots,m$:
  \[
  \norm{q_{ij}(y_k)}\,%
  \abs{\int_{\tau_k}^{\tau_{k+1}} \int_{\tau_k}^
    s\,dW^i(r)\,dW^j(s)} %
  =\frac 12\alpha^2 h.
  \]
  Then,
  \[
  C_\delta |\tau_{k+1}-\tau_k|^{1-\delta}
  \ge \abs{\int_{\tau_k}^{\tau_{k+1}} \int_{\tau_k}^s\,W^i(r)\,dW^j(s)}
  \ge \frac{\alpha^2 h}{2\norm{q_{ij}(y_k)} }.
  \]
  In other words,
  \[
  h \le { \frac{2 C_\delta \norm{q_{ij}(y_k)}}{\alpha^2}}%
  |\tau_{k+1}-\tau_k|^{1-\delta}%
  \le \KBI|\tau_{k+1}-\tau_k|^{1-\delta}.
  \]
  Thus,  \cref{eq:hitme3} holds and the proof is
  complete.
\end{proof}

The error bound for Adaptive-II found in the next theorem scales
(in terms of $g_j$)  like the one for Adaptive-I.
\begin{theorem}[adaptive-II]%
  \label[theorem]{thm:adb}
  Let \cref{ass:4.1,ass:4.2} hold for the SDE~\eqref{eq:sode} and
  suppose that $g_j\in \Cb^3(\real^d,\real^d)$.  Let $y_n$ denote
  the {\EM} approximation at times $\tau_n$ with adaptive increments
  given by \cref{eq:bdalt,eq:em}. Then, for
  $0<\epsilon<1/2$ and  $y_0\in\real^d$, there exists a random
  variable $\adapconst>0$ such that, almost surely,
  \[
  \sup_{0\le \tau_n\le T}
  \norm{y_n%
    -\vec y(\tau_n; 0,\vec y_0)}%
  \le \sdeconst(\epsilon,T)\, \adapconst 
  \frac{1}{N^{1/2-\epsilon}},\qquad%
  N\ge 1,
  \]
  where $\norm{\adapconst}_{L^p(\Omega)}\le K_1(\epsilon, p,
  \alpha,T)\norm{g_j}_{\Cb^2}^{1+\epsilon}
  +K_2(\epsilon,p,\alpha,\norm{g_j}_{\Cb^3},T)/N^{1/2}$ for some
  $K_1(\epsilon, p, \alpha,T)$ independent of $N$ and $g_j$, and
  some $K_2(\epsilon,p,\alpha,\norm{g_j}_{\Cb^3},T)$ independent
  of $N$.
\end{theorem}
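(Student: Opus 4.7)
The plan is to verify \cref{ass:newx} with $\gamma=1/2$ for the Adaptive-II scheme and then invoke \cref{thm:global_alt}, following the template of the proof of \cref{thm:ada} but with \cref{lemma:control2} replacing \cref{lemma:control}. The only genuinely new ingredient is the estimate of the principal double-stochastic-integral contribution to $X_{kn}(y_0)$: rather than the pointwise argument of \cref{lemma:p} that exploited the uniform bound $\norm{\phi_N(s)}\le\alpha$ coming from \cref{eq:bdc}, the Adaptive-II rule \cref{eq:bdalt} bounds the product $\norm{q_{ij}(y_n)}\cdot\abs{\int\int dW^i\,dW^j}$ directly, which is exactly the form needed to apply Azuma's inequality.

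First, apply the It\^o expansion \cref{eq:quit} to split $X_{kn}(y_0)$ into a principal term $\sum_{i,j=1}^m\sum_{n=k}^{\ell-1}\xi_n^{ij}$ with
\[
\xi_n^{ij}\coloneq q_{ij}(y_n)\int_{\tau_n}^{\tau_{n+1}}\int_{\tau_n}^s dW^i(r)\,dW^j(s),
\]
plus three correction terms (one $dr\,dW$, one $dW\,ds$, and a triple stochastic integral). The three corrections are handled exactly as in the proof of \cref{thm:ada} and contribute a bound of the form $\newxconst^2 h^{1-\epsilon}(\tau_n-\tau_k)^{(1+\epsilon)/2}$ with $\norm{\newxconst^2}_{L^p(\Omega)}$ depending on $\norm{g_j}_{\Cb^3}$ but independent of $N$. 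For the principal term, \cref{eq:bdalt} gives $\norm{\xi_n^{ij}}\le\tfrac12\alpha^2 h$, and since $\tau_n$ is a stopping time the double It\^o integral is mean zero conditional on $\mathcal{F}_{\tau_n}$ (the diagonal case $i=j$ reduces via It\^o to $\tfrac12((W^i(\tau_{n+1})-W^i(\tau_n))^2-(\tau_{n+1}-\tau_n))$, still mean zero by optional stopping). Hence $(\xi_n^{ij})$ is a bounded $\real^d$-valued martingale difference sequence adapted to $(\mathcal{F}_{\tau_n})$. Doob's maximal inequality together with the Azuma--Hoeffding inequality (applied componentwise, say) gives for any $p>0$
\[
\probs\Bigl(\max_{k<\ell\le N}\norm{\sum_{n=k}^{\ell-1}\xi_n^{ij}} > \alpha^2 h\sqrt{(\ell-k)\,p\log N}\Bigr)\le CN^{-p},
\]
and a union bound over the $O(N^2)$ pairs $(k,\ell)$ followed by Borel--Cantelli in $N$ (with $p$ large) yield an almost-sure random constant $K\in L^p(\Omega)$, independent of $N$ and of $g_j$, such that $\norm{\sum_{n=k}^{\ell-1}\xi_n^{ij}}\le K\alpha^2 h\sqrt{(\ell-k)\log N}$ for every $k,\ell,N$.

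To put this estimate into the form of \cref{ass:newx}, use \cref{lemma:control2}: with $\delta$ small, $h\le\KBI\abs{\tau_{j+1}-\tau_j}^{1-\delta}$ for every internal step, whence by H\"older's inequality $\ell-k\le(\KBI/h)^{1/(1-\delta)}(\tau_n-\tau_k)$ and
\[
h\sqrt{\ell-k}\le \KBI^{1/(2(1-\delta))}\,h^{(1-2\delta)/(2(1-\delta))}(\tau_n-\tau_k)^{1/2}.
\]
Choosing $\delta=\epsilon/(1+2\epsilon)$ and absorbing $\sqrt{\log N}$ into a factor $h^{-\epsilon/2}$ yields a bound of the form $\newxconst^1 h^{1/2-\epsilon}(\tau_n-\tau_k)^{(1+\epsilon)/2}$ with $\norm{\newxconst^1}_{L^p(\Omega)}\le K_1(\epsilon,p,\alpha,T)\norm{g_j}_{\Cb^2}^{1+\epsilon}$: the Azuma bound itself is $g$-free, and all $g$-dependence enters through $\KBI^{O(\epsilon)}$ with $\KBI\sim\norm{g_j}_{\Cb^2}^2$. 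Combined with the correction bound this verifies \cref{ass:newx} with $\gamma=1/2$; \cref{thm:global_alt} then delivers the claimed estimate, and the short terminal step $[\tau_{N-1},T]$, which \cref{lemma:control2} does not cover, is absorbed into the $K_2/\sqrt N$ correction using the single-step bound \cref{eq:3}.

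The main obstacle is the Azuma step. The naive bound $\sum_n\norm{\xi_n^{ij}}\le\alpha^2 T/2$ is far too crude, and recovering the $\sqrt{\ell-k}$ concentration gain almost surely and uniformly in $(k,\ell,N)$ is precisely what the combined Doob--Azuma--union-bound--Borel--Cantelli argument provides; in particular the martingale-difference structure is available because the Adaptive-II rule controls the double integral (not just the Brownian increment). It is also this $g$-independence of the Azuma bound that produces the improved $\norm{g_j}_{\Cb^2}^{1+\epsilon}$ scaling of $\adapconst$ in \cref{thm:adb} compared to the $\norm{g_j}_{\Cb^2}^{1+2\epsilon}$ of \cref{thm:ada}.
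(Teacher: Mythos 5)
Your proposal is correct and follows essentially the same route as the paper: decompose $X_{kn}$ via the It\^o expansion \cref{eq:quit}, observe that the Adaptive-II rule makes the leading double-integral contributions a bounded martingale-difference sequence (the paper's ``sum of independent random variables'' phrasing is a slight misnomer corrected by your reading), apply \cref{azuma} together with a union bound over pairs $(k,n)$ and Borel--Cantelli over $N$, convert $(n-k)$ to $|\tau_n-\tau_k|$ via \cref{lemma:control2}, and then invoke \cref{thm:global_alt}. Your use of Doob's maximal inequality before Azuma, the componentwise treatment of $\xi_n^{ij}$, and the particular choice $\delta=\epsilon/(1+2\epsilon)$ are cosmetic variants of the paper's calculation (which sets $(1+\epsilon)/2(1-\delta)=1/2+3\epsilon/4$), and the exponent you actually obtain from $\KBI^{1/(2(1-\delta))}\sim\norm{g_j}_{\Cb^2}^{(1+2\epsilon)/(1+\epsilon)}$ is consistent with the stated $\norm{g_j}_{\Cb^2}^{1+\epsilon}$ after relabelling $\epsilon$, so the scaling claim also matches.
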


\begin{proof}
  Following the proof of \cref{thm:ada}, it is enough to treat
  the term
  \[
  S_{kn}%
   \coloneq  \sum_{j=k}^{n-1} S_j,\qquad \text {for
  }S_k \coloneq \sum_{i,j=1}^m\int_{\tau_k}^{\tau_{k+1}} %
  \int_{\tau_k}^s q_{ij}(y_k) \,dW^i(r) \,dW^j(s) \text{ and $0\le
    k<n\le N$,}
  \]
  and show it satisfies the condition on $X_{kn}$ in
  \cref{ass:newx}.  By the optional stopping theorem, each
  $S_k$ has mean zero and, by~\cref{eq:bdalt}, $\norm{S_k}
  \le m^2 \alpha^2 h/2$ for $h=T/N$. Then, $S_{kn}$ is a sum
  independent random variables, each with mean zero and
  bounded by $m^2 \alpha^2 h/2$ .  Azuma's inequality (see
  \cref{azuma} with $\lambda=(n-k)^{1/2} h^{1-\epsilon/4}$)
  gives
  \[ 
  \prob{ \norm{S_{kn}} \ge \lambda}%
  \le 2
  \exp\pp{\frac{-4\lambda^2}{2(n-k) m^4 \alpha^4 h^2}}%
  =2\exp\pp{\frac{-2}{m^4 \alpha^4 h^{\epsilon/2}}}.
  \]
  Let $F_N\coloneq \{\omega\in\Omega\colon \sup_{0\le k<n\le N}
  \norm{S_{kn}(\omega)} /(n-k)^{1/2}\ge h^{1-\epsilon/4}\}$.
  Using $h=T/N$, 
  \[ 
  \prob{F_N} \le \sum_{0\le k<n\le N} \prob{ \norm{S_{kn}} \ge %
    \lambda} %
  \le  (N+1)(N+2) \exp\pp{\frac{-2N^{\epsilon/2} }{m^4 \alpha^4
      T^{\epsilon/2}}}.%
\]
Choose $C_\epsilon$ so that $( N+1)(N+2) \exp(-N^{\epsilon/2}/m^4
\alpha^4 T^{\epsilon/2}) \le C_\epsilon$ and
\[
  \prob{F_N}
  \le C_\epsilon \exp\pp{\frac{-N^{\epsilon/2}}{ m^4 \alpha^4
      T^{\epsilon/2}}}\qquad\text{for $N\in \naturals$ .}
  \]
  Then $\sum_{N=1}^\infty \prob{F_N}<\infty$ and the
  Borel--Cantelli lemma applies, to give
  \begin{equation}
    \label[ineq]{eq:1}
    \sup_{0\le k<n\le N} \frac{\norm{S_{kn}}
      N^{1-\epsilon/4}}{(n-k)^{1/2}}%
    \le C, 
  \end{equation}
almost surely,  for some random variable $C$.

  By \cref{lemma:control2}, for each $\delta>0$, there is a
  $\KBI$ such that
  \[
  \pp{\frac{1}{\KBI} \frac{T}{N}}^{1/(1-\delta)} \le
  |\tau_{k+1}-\tau_k|%
  \]
  and, summing  $|\tau_{k+1}-\tau_k|,\dots,|\tau_{n}-\tau_{n-1}|$,
  \[
  (n-k)\pp{\frac{1}{\KBI} \frac{T}{N}}^{1/(1-\delta)} \le
  |\tau_n-\tau_k|.%
  \]
  Choose $\delta$ so that $(1+\epsilon)/2(1-\delta)=1/2+3\epsilon/4$. Then,
  \[
  \frac{N^{1/2-\epsilon}}{ |\tau_n-\tau_k|^{(1+\epsilon)/2}}%
  \le \pp{\frac{\KBI}{ T}}^{(1+\epsilon)/2(1-\delta)}
  \frac{N^{1/2 -\epsilon+(1+\epsilon)/2(1-\delta)}}{|n-k|^{(1+\epsilon)/2}}%
  \le \pp{\frac{\KBI}{ T}}^{1/2+3\epsilon/4}
  \frac{N^{1-\epsilon/4}}{|n-k|^{1/2}}.%
  \]
  Now use
  \cref{eq:1}  to gain
  \[
  \sup_{0\le k<n\le N}%
  \frac{\norm{S_{kn}} N^{1/2-\epsilon}}{|\tau_n-\tau_k|^{(1+\epsilon)/2}} \le
  \pp{\frac{\KBI}{ T}
  }^{1/2+3\epsilon/4}\sup_{0\le k<n\le N}%
  \frac{\norm{S_{kn}} N^{1-\epsilon/4}}{(n-k)^{1/2}}%
  \le C\pp{\frac{\KBI}{ T}
  }^{1/2+3\epsilon/4}.
  \]
  Thus, $\norm{S_{kn}} \le \tilde C |\tau_n-\tau_k|^{(1+\epsilon)/2}
  h^{1/2-\epsilon}$ for a constant $\tilde C$. The remainder
  of the argument is the same as in the proof of
  \cref{thm:ada}. In this case, the dependence on
  $\norm{g_j}_{C^2_b}$ comes from $\KBI$.
\end{proof}                     %

\begin{figure}
 \centering
  \includegraphics{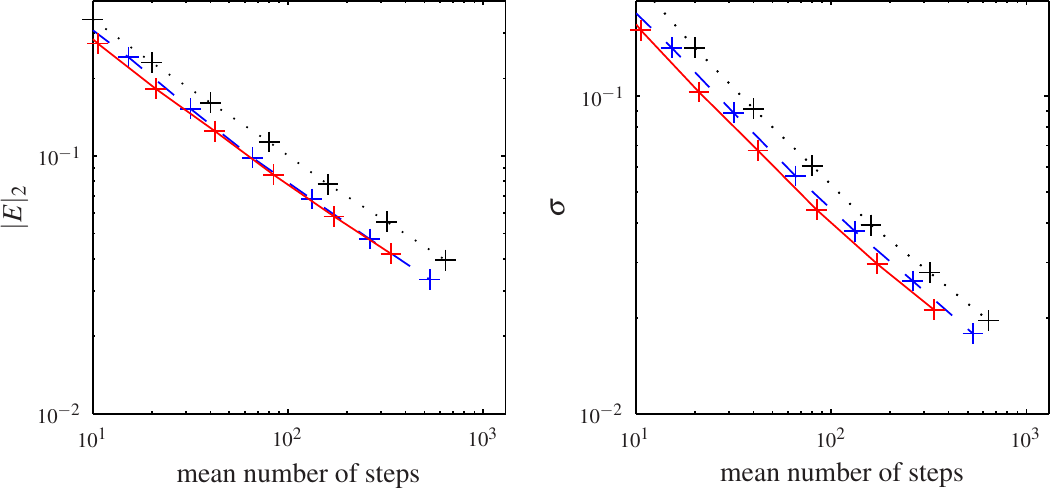} %
  \caption{For \cref{eq:test5}, (left) the sample mean of the
    maximum relative error $|E|_2$ and (right) standard deviation
    $\sigma$ of the error $E$ (see \cref{eq:error}) against the
    mean number of steps taken (based on 5000 samples). We plot
    ({\color{blue}dashed}) Adaptive-I with $\alpha=0.5$,
    ({\color{red}solid}) Adaptive-II with $\alpha=0.9$, and
    (dotted) fixed time-stepping {\EM}
    method.}\label[figure]{fig:a3}
\end{figure}

\subsection{Experiments with adaptive algorithms}\label{ss:exps}
We now test  Adaptive-I and Adaptive-II
with the {\EM} method using the following initial value problems:
\begin{equation}
  \label{eq:test3}
  dy=0.1 y\,dt + 1.2 y\,dW(t), \qquad y(0)=1
\end{equation}
and
\begin{equation}%
  \label{eq:test5}
  dy=1.5 y\,dt + 2.4 y\,dW(t), \qquad y(0)=1.
\end{equation}
Here, $d=m=1$ and $W(t)$ is a standard Brownian motion.
We integrate both equations numerically with the {\EM} method on the
interval $[0,T]=[0,1]$ and compare the result with the exact
solution (geometric Brownian motion). In this example, the drift
and diffusion functions $g_j(y)$ are linear and unbounded. In the
experiments, we replace $\norm{q_{ij}(y_k)}$ in
\cref{eq:bdc,eq:bdalt} with $\min\Bp{\norm{q_{ij}(y_k)},100}$;
this ensures the time steps do not become too small.

For Adaptive-I, time steps and Brownian increments are generated
using the method of \citep{MR1722281}. For Adaptive-II, we'd like %
to sample from an exit time problem on the domain shown in
\cref{fig:1}. We use the following approximate algorithm to
sample from this distribution: Let $R$ denote
the shaded region in \cref{fig:1} for a given $\alpha$ and $h$.
Choose a parameter $\beta>0$.
\begin{enumerate}[1)]
\item Let $(\tau^0, x^0)=(0,0)$ and $k=0$.
\item\label{this} Choose the largest $a_1$ such that $\{\tau^k\}\times
  [x^k-a_1,x^k+a_1]\subset R$, and then the largest $a_0$ such
  that $[\tau^k,\tau^k+a_0]\times [x^k-a_1,x^k+a_1]\subset R$.
\item Use the algorithm of \citep{MR1722281} to find the first exit point
  $(\tau,W(\tau))$ from $[0,a_0]\times
  [-a_1,a_1]$ of the process $(t,W(t))$.
\item Let $\tau^{k+1}=\tau^k+\tau$ and $ x^{k+1}=
  x^k+W(\tau)$. If $\tau < \beta h$,  stop and output
  $(\tau^{k+1},x^{k+1})$. Otherwise, increase $k$ and go to \ref{this}).
\end{enumerate}
The output can be used as the time step and Brownian increment,
which always belongs to the region $R$ but is unlikely to be an
exit point. We apply this method with $\beta=1/10$ to implement
Adaptive-II and the resulting distribution of steps taken is
shown in \cref{fig:c}.

We compute the maximum relative error on the partition ${\cal
  T}$, defined by
\begin{equation}\label{eq:error}
  E%
   \coloneq \frac{1}{Z}%
  \max_{\tau_n\in {\cal T}}
  |y_n-y(\tau_n)|, \qquad%
  \text{ where }%
  Z
   \coloneq \max_{\tau_n\in \mathcal{T}}|y(\tau_n)|.
\end{equation} 
For $M$ \iid samples $E_1,\dots,E_M$ of $E$, we plot 
\[
|E|_2%
 \coloneq \pp{\sum_{j=1}^M E_j^2}^{1/2}
\]
and the standard deviation $\sigma$ of $E_1,\dots,E_M$ against
the mean number of steps, using $M=5000$ samples in
\cref{fig:a3,fig:a4}. The adaptive methods outperform the fixed
time-stepping method in terms of mean error (when taken with the
same mean number of steps) and a 20\% reduction in error is
observed using either adaptive algorithm. Furthermore, the
adaptive methods, especially Adaptive-II, produce a narrower range of errors, as we find
the standard deviation of the errors is smaller. Whilst this is
encouraging, we are discounting the extra time involved in
sampling the bounded diffusion and, as this algorithm is slow
compared to sampling a Gaussian increment, the adaptive methods
are not yet fully practical.
 
Two further plots are shown for \cref{eq:test3}.  \cref{fig:b}
shows a scatter plot of the errors $E$ against the number of
steps taken for the three methods.  \cref{fig:c} shows the mean
and standard deviation of the number of steps used in the
experiments for a set of $h$ values.

\begin{figure}%
  \centering
  \includegraphics{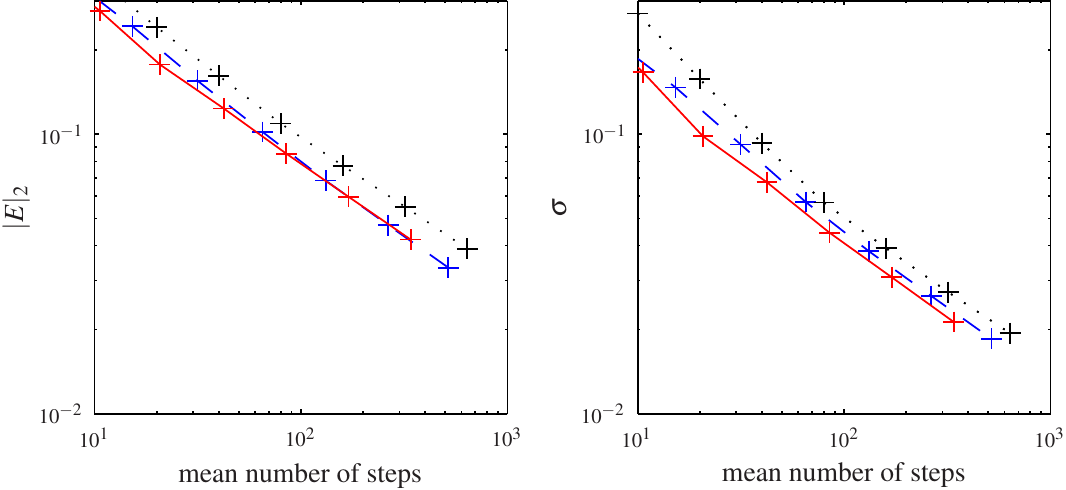} 
  \caption{For \cref{eq:test3}, (left) $|E|_2$ and (right)
    standard deviation $\sigma$ of $E$ against the mean number of
    steps taken. We plot ({\color{blue}dashed}) Adaptive-I with
    $\alpha=0.5$, ({\color{red}solid}) Adaptive-II with
    $\alpha=0.9$, and (dotted) fixed time-stepping {\EM}
    method.} \label[figure]{fig:a4}%
\end{figure}

\begin{figure}\centering
  \centering
  \includegraphics{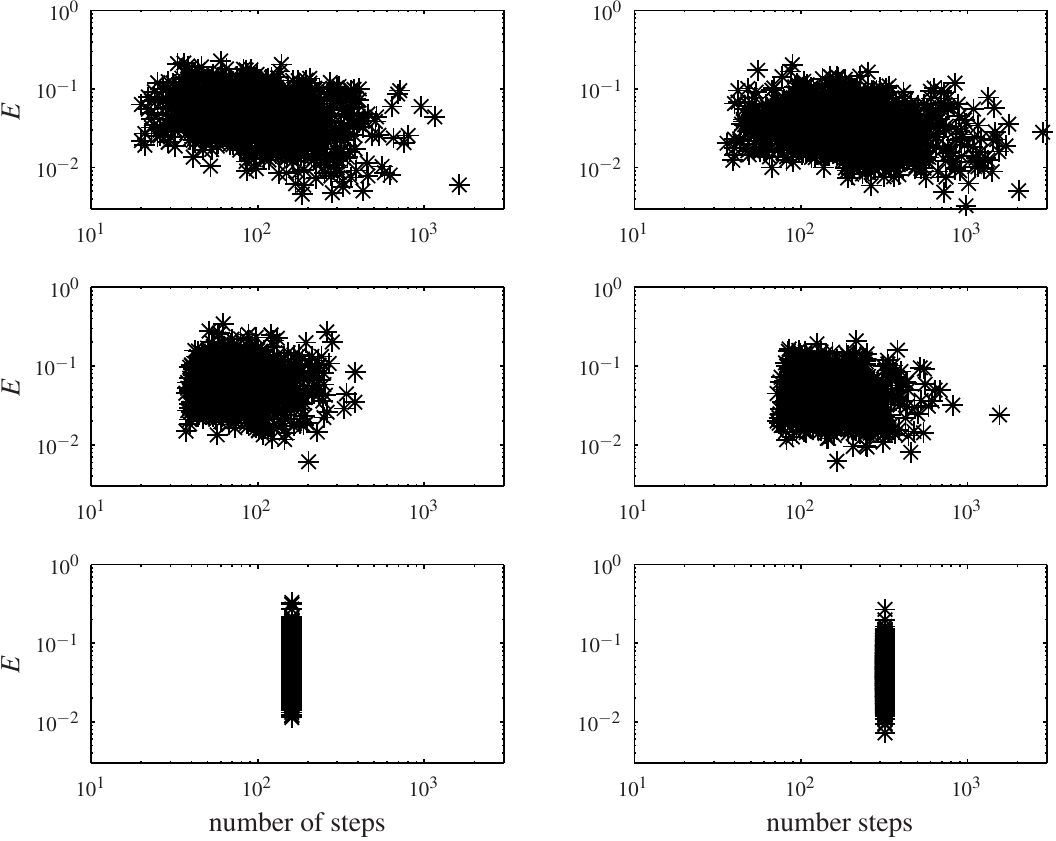} 
  \caption{Plots of the relative error $E$ against number of
    steps taken $N$ for 100 samples of the approximation to
    \cref{eq:test3} for (top) Adaptive-I with $\alpha=0.5$,
    (middle) Adaptive-II with $\alpha=0.9$, and (bottom) fixed
    time-stepping {\EM}.}\label{fig:b}
\end{figure}

\begin{figure}\centering
  \centering 
  \includegraphics{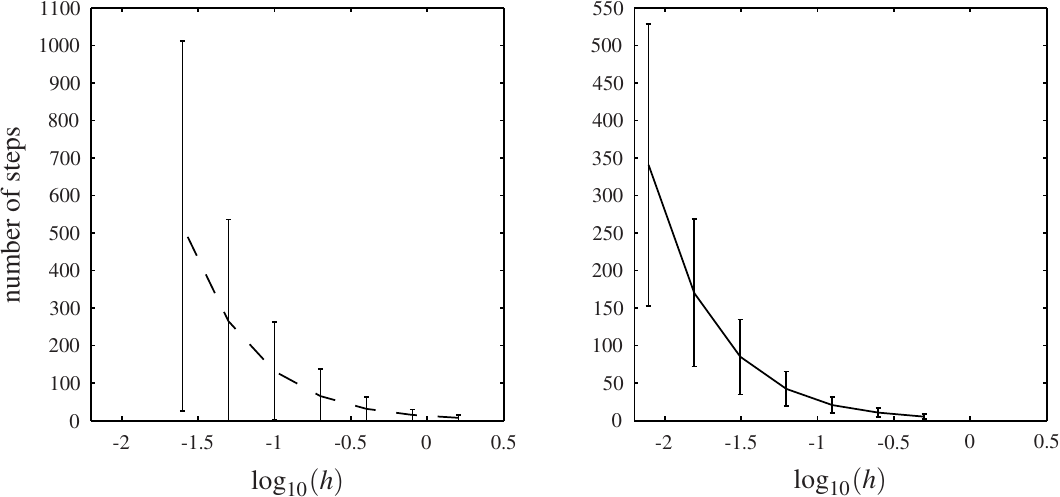} 
  \caption{Mean number of steps with error bars indicating one
    standard deviation for (left) Adaptive-I with $\alpha=0.5$
    and (right) Adaptive-II with $\alpha=0.9$ applied to
    \cref{eq:test3}.}\label{fig:c}
\end{figure}

\section{Conclusion}

We presented a new proof of pathwise convergence for
numerical approximation of SDEs, which avoids the need to
prove the $p$th-mean error converges at a polynomial
rate. This requires two pathwise assumptions: first, a
Lipschitz type assumption of the deviation of the sample
path of the solution from its initial data and, second, a
pathwise bound on the sum of the truncation errors. The
proof of the pathwise-convergence theorem uses no
probabilistic arguments. We showed how to apply the theorem
to the {\EM} method with fixed time-stepping. We also 
introduced two adaptive time-stepping methods, motivated by
the truncation error sum condition. To complete these proofs
and verify the assumptions of the pathwise-convergence
theorem, we do use probabilistic arguments and estimate
moments and apply the Borel--Cantelli lemma.  The main
advantage of this approach is that more detailed constants
are gained and we are able to find a tighter error bound for
the adaptive methods. Computations with SDEs usually work by
computing paths, even if the quantity of interest is an
average or exit time, and the pathwise condition on the
truncation error sum is a convenient framework for studying adaptive
methods.

Some experiments were shown with geometric Brownian motion using
a method of \citep{MR1722281} for bounded diffusions to generate
the appropriate time steps and Brownian increments. For a given
mean number of time steps, the errors are smaller and show less
variation than for fixed time-stepping. This extra accuracy,
which does not change the rate of convergence, requires sampling
bounded diffusions rather than diffusions over fixed time-steps
(\iid Gaussian random variables) and this makes the algorithm
expensive to implement. Finding a fast method for sampling
bounded diffusions is a key point to be addressed in future
research.
\appendix

\section{Useful results}
The first result concerns the regularity of sample paths and
gives a large class of SDEs where \cref{ass:4.2} holds.
\begin{lemma}%
  \label[lemma]{lemma:quote_me}%
  If $\vec g_j\in \Cb^3(\real^d,\real^d)$, then the solution
  $y(t;s,\vec z)$ of \cref{eq:sode} satisfies \cref{ass:4.2}.
\end{lemma}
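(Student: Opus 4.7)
The plan is to use the differentiability of the flow $z\mapsto y(t;s,z)$ and reduce the statement to a pathwise Hölder-in-time estimate on the Jacobian. Under $g_j\in\Cb^3$, the Jacobian $J(t;s,z) \coloneq D_z y(t;s,z)$ exists, is continuously differentiable in $z$, and satisfies the linear variational SDE
\begin{equation*}
  J(t;s,z) = I + \int_s^t Dg_0(y(r;s,z))\,J(r;s,z)\,dr
  + \sum_{j=1}^m \int_s^t Dg_j(y(r;s,z))\,J(r;s,z)\,dW^j(r).
\end{equation*}
By the fundamental theorem of calculus applied to $\theta\mapsto y(t;s,\zeta_\theta)$ with $\zeta_\theta\coloneq \theta z^1+(1-\theta)z^2$,
\begin{equation*}
  \bp{y(t;s,z^1)-z^1}-\bp{y(t;s,z^2)-z^2}
  = \int_0^1 \bp{J(t;s,\zeta_\theta)-I}\,d\theta\,(z^1-z^2).
\end{equation*}
Hence it suffices to produce a single random variable $\sdeconstd$ with $\sup_{z\in\real^d}\norm{J(t;s,z)-I}\le \sdeconstd(t-s)^{1/2-\epsilon/3}$ for all $0\le s<t\le T$, almost surely.

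I would then split $J-I$ into its drift and martingale parts. The drift $\int_s^t Dg_0(y)J\,dr$ is pathwise bounded by $\norm{Dg_0}_\infty\,(\sup_{r\in[s,t]}\norm{J(r;s,z)})(t-s)$, and on $[0,T]$ we have $(t-s)\le T^{1/2+\epsilon/3}(t-s)^{1/2-\epsilon/3}$, so this term is absorbed into $\sdeconstd$ once we know $\sup_{r,z}\norm{J(r;s,z)}$ has all moments finite (which is immediate from the variational SDE with bounded coefficients via Gronwall and BDG). For the martingale part $M(t;s,z) \coloneq \sum_j\int_s^t Dg_j(y)J\,dW^j$, BDG gives the moment bound $\mean{\norm{M(t;s,z)-M(t';s,z)}^p}^{1/p}\lesssim |t-t'|^{1/2}$ uniformly in $z$ for every $p\ge 1$.

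The main obstacle is upgrading these moment bounds to a \emph{pathwise} Hölder estimate that is \emph{uniform in $z\in\real^d$}. My plan is to apply the Kolmogorov continuity criterion jointly in $(s,t,z)$: differentiating the variational SDE once more (using the full $C^3_\mathrm{b}$ regularity of $g_j$ to bound $D^2_z y$) gives the Lipschitz-in-$z$ moment estimate $\mean{\norm{J(t;s,z^1)-J(t;s,z^2)}^p}^{1/p}\lesssim |z^1-z^2|$, and combined with the temporal bound above this yields a joint continuity estimate of Kolmogorov type on any compact set of initial data. Uniformity in $z\in\real^d$ is then recovered by exploiting the fact that the coefficients of the variational SDE depend on $z$ only through $y(r;s,z)$, whose driving noise is shared for all $z$, so that the a.s.\ Hölder constant can be taken independent of $z$; this is essentially the rough-path-based flow statement of \citep[Thm.~10.26 and its corollaries]{friz_multidimensional_2010}, which I would invoke to conclude. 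Setting $\sdeconstd$ to be the resulting Hölder constant then gives~\cref{eq:lte2}.
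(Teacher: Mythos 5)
Your strategy (differentiate the flow, pass to the variational SDE for the Jacobian $J(t;s,z)=D_z y(t;s,z)$, and reduce to a H\"older-in-time bound on $J-I$ uniform over $z$) is genuinely different from the paper's argument, and the reduction via the fundamental theorem of calculus is sound. However, the plan has a real gap exactly where you flag it. The Kolmogorov/BDG route gives moment bounds, and a Kolmogorov-type joint continuity criterion in $(s,t,z)$ will at best give an a.s.\ H\"older constant that is uniform over a \emph{compact} set of initial data $z$; it will not give $\sup_{z\in\real^d}\norm{J(t;s,z)-I}\le\sdeconstd(t-s)^{1/2-\epsilon/3}$. Your resolution --- that the driving noise is shared across $z$, so the H\"older constant ``can be taken independent of $z$'' --- is not an argument: sharing the driving Brownian paths controls how $J(\cdot;\cdot,z)$ varies in $z$, but it does not prevent the random H\"older constant from degenerating as $z\to\infty$ without some structural input. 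There is also a circularity you gesture at but do not close: the drift estimate needs control of $\sup_{r,z}\norm{J(r;s,z)}$, which is of the same kind as the quantity you are bounding. Finally, the sentence ending ``this is essentially [Thm.~10.26], which I would invoke to conclude'' effectively concedes that the Jacobian machinery is being abandoned in favour of quoting the same theorem directly --- at which point the whole detour is unnecessary.

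The paper avoids all of this with a short coordinate-shift trick. Set $\vec x(t;s,\vec z)\coloneq\vec y(t;s,\vec z)-\vec z$. Then $\vec x(\cdot;s,\vec z^i)$ for $i=1,2$ both start from $0$, but solve SDEs with shifted vector fields $V^i_j(\vec q)=g_j(\vec q+\vec z^i)$. Friz and Victoir's Theorem 10.26 gives a pathwise Lipschitz-in-vector-field estimate with a H\"older factor in $t-s$,
\[
\norm{\vec x(t;s,\vec z^1)-\vec x(t;s,\vec z^2)}\le C\,|t-s|^{1/2-\epsilon}\,\max_j\norm{V^1_j-V^2_j}_{\Cb^2},
\]
and then $g_j\in\Cb^3$ gives $\norm{V^1_j-V^2_j}_{\Cb^2}\le\norm{g_j}_{\Cb^3}\norm{\vec z^1-\vec z^2}$, so the constant $C$ is one random variable that does not depend on $\vec z^1,\vec z^2$ at all. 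The uniformity over $\real^d$ that your approach struggles with is thus obtained for free, because the $z$-dependence has been moved into the vector field, where the estimate is linear in the $\Cb^2$-perturbation. If you want to keep the Jacobian viewpoint, you would need to supply a genuine pathwise, $z$-uniform bound on $\sup_{r\le T}\norm{J(r;s,z)}$ and on the H\"older seminorm of $t\mapsto J(t;s,z)-I$; absent that, the proposal as written does not close.
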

\begin{proof}
Fix $\epsilon>0$.
  Let $\vec x(t;s,\vec z) \coloneq \vec y(t;s,\vec z)-\vec z$ for $\vec
  z\in\real^d$ and $0\le s,t\le T$.  Then, $\vec x(t;s,z)$
  satisfies
  \[
  d\vec x%
  =d\vec y%
  =\sum_{j=0}^m g_j(\vec y)\,dW^j(t)%
  =\sum_{j=0}^m g_j(\vec x+\vec z)\,dW^j(t),
  \qquad \vec x(s;s,\vec z)=0
  \]
  and $\vec x(t; s,\vec z^i)$ for $i=1,2$ are two solutions of an
  SDE with modified vector fields $\vec V_j^i(\vec q)=\vec
  g_j(\vec q+\vec z^i)$. Then,  \citep[Theorem
  10.26]{friz_multidimensional_2010} 
  gives
  \[
  \norm{\vec x(t;s,\vec z^1)-\vec x(t;s,\vec z^2)}%
  \le C|t-s|^{1/2-\epsilon} \max_j\norm{V^1_j-V^2_j}_{\Cb^2}
  \]
  for some $C$ dependent on the sample path of $W^j(t)$. Because
  $g_j\in \Cb^3(\real^d,\real^d)$, it is clear that
  $\norm{V^1_j-V^2_j}_{\Cb^2} \le \norm{g_j}_{\Cb^3}\norm{\vec
    z^1-\vec z^2}$ and hence 
  \[
  \norm{\pp{\vec y(t;s,\vec z^1)-\vec z^1}%
    -\pp{\vec y(t;s,\vec z^2)-\vec z^2} }%
  \le C |t-s|^{1/2-\epsilon} \norm{\vec z^1-\vec z^2},%
  \]
  for all $\vec z^1, \vec z^2\in\real^d$ and $0\le s,t\le T$.
  This is \cref{ass:4.2}.
\end{proof}

For a stochastic process $Y(t)$, the modulus of continuity
\[
\omega_Y(h)%
 \coloneq \sup_{t,s\in [0,T],\; |t-s|\le h} \norm{ \vec Y(t)-\vec Y(s) },\qquad
h>0.
\]
The following result examines the modulus of continuity when $Y$
is an \Ito integral and is applied to the truncation
error in order to derive Assumption~\ref{ass:newx}.
\begin{proposition}%
  \label[proposition]{prop:recent}
  Let
  \[
  \vec Y(t)%
  = \int_0^t \vec q_0(s)\, ds%
  +\sum_{j=1}^m \int_0^t \vec q_j(s)\, dW^j(s)
  \] 
  for predictable processes $\{\vec q_j(s)\colon
  s\in[0,T]\}$. Consider random variables $\zeta,\xi$ such that,
  for some $p\ge 1$ and $K,\mu>0$,
  \[
  \mean{|\zeta|^p}\le K\quad \text{ and }\quad%
  \mean{|\xi|^{p/2+\mu}}\le K,
  \] and such that, for $0\le s<t\le T$,
  \[
  \int_s^t \norm{\vec q_0(r)} \,dr%
  \le \zeta\;\sqrt{|t-s|\log(2T/|t-s|)},\qquad
  \sum_{j=1}^m  \int_s^t \norm{\vec q_j(r)}^2 \,dr%
  \le \xi\;|t-s|.
  \]
  Then, for any $0< \epsilon<1/2$, there exists a random
  variable $C_\epsilon\in L^p(\Omega)$  such that 
  \[
  \omega_Y(h)
  \le C_\epsilon h^{1/2-\epsilon}\qquad %
  \text{for all $s,t\in[0,T]$}
  \]
  and $\norm{C_\epsilon}_{L^p(\Omega)}$ depends only on
  $\epsilon, \mu, p, K, T$ and is independent of $q_j$.
\end{proposition}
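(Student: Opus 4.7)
The plan is to split $Y=Y_0+M$ where $Y_0(t)\coloneq\int_0^t \vec q_0(r)\,dr$ is the drift part and $M(t)\coloneq\sum_{j=1}^m\int_0^t\vec q_j(r)\,dW^j(r)$ is the martingale part, and to bound the modulus of continuity of each piece separately, finally combining them via $\omega_Y(h)\le\omega_{Y_0}(h)+\omega_M(h)$.

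For the drift, the pathwise hypothesis gives
\[
\|Y_0(t)-Y_0(s)\|\le\int_s^t\|\vec q_0(r)\|\,dr\le\zeta\sqrt{|t-s|\log(2T/|t-s|)}.
\]
Since $h\mapsto h^{\epsilon}\sqrt{\log(2T/h)}$ is bounded on $(0,T]$ by some explicit constant $c_\epsilon(T)$, this at once yields $\omega_{Y_0}(h)\le c_\epsilon(T)\,\zeta\,h^{1/2-\epsilon}$, and the assumption $\mean{|\zeta|^p}\le K$ places the resulting H\"older constant in $L^p(\Omega)$ with norm depending only on $\epsilon,p,K,T$.

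For the martingale the key tool is the Burkholder--Davis--Gundy inequality applied at exponent $q\coloneq p+2\mu$. Using the pathwise bound on the quadratic variation,
\[
\mean{\|M(t)-M(s)\|^{q}}\le C_q\,\mean{\Bigl(\sum_{j=1}^m\int_s^t\|\vec q_j(r)\|^2\,dr\Bigr)^{q/2}}\le C_q\,\mean{|\xi|^{q/2}}\,|t-s|^{q/2}\le C_qK\,|t-s|^{q/2}.
\]
This two-point moment bound of the form $C|t-s|^{1+\beta}$ with $\beta=q/2-1$ then feeds into the Kolmogorov--Chentsov continuity theorem (or equivalently the Garsia--Rodemich--Rumsey inequality) to furnish a random H\"older constant $C_\epsilon^M$ with $\omega_M(h)\le C_\epsilon^M h^{1/2-\epsilon}$, whose $L^q(\Omega)$-norm, and hence by Jensen's inequality its $L^p(\Omega)$-norm, is controlled by $C_qK$ and the constant from Kolmogorov--Chentsov; both depend only on $\epsilon,\mu,p,K,T$ because every input from $\vec q_j$ has already been absorbed into $\xi$.

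The step requiring the most care is the martingale estimate: one must keep track of how the slack $\mu>0$ in the moment assumption is consumed when trading the BDG exponent $q$ against the desired H\"older exponent $1/2-\epsilon$, and one must verify that all constants coming out of BDG and the continuity criterion depend only on $q,T$ and not on the integrand $\vec q_j$, so that the final $L^p$ bound on $C_\epsilon\coloneq c_\epsilon(T)\zeta+C_\epsilon^M$ depends only on $\epsilon,\mu,p,K,T$. Once this accounting is complete, adding the drift and martingale bounds yields the claim.
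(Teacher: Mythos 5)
The decomposition $Y=Y_0+M$ and the bound on the drift part are fine, but the martingale estimate has a genuine gap. Applying Burkholder--Davis--Gundy at exponent $q=p+2\mu$ gives $\mean{\norm{M(t)-M(s)}^q}\le C_q K\,|t-s|^{q/2}$, and the Kolmogorov--Chentsov criterion (or Garsia--Rodemich--Rumsey with $\Psi(x)=x^q$) then produces a H\"older modification only of exponent $\gamma<\beta/q=1/2-1/q$. Since $q$ is pinned down by the hypotheses --- you have no control on $\mean{|\xi|^{q'/2}}$ for $q'>p+2\mu$, so you cannot raise $q$ --- this caps the achievable H\"older exponent at $1/2-1/(p+2\mu)$. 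The proposition, however, asserts the bound for every $\epsilon\in(0,1/2)$, in particular for $\epsilon<1/(p+2\mu)$, which your route cannot reach. Worse still, the hypotheses only guarantee $p\ge 1$ and $\mu>0$; if $p+2\mu\le 2$ then $\beta=q/2-1\le 0$ and Kolmogorov--Chentsov gives nothing at all, yet the proposition still makes a nontrivial claim in that regime. Your closing remark that one ``must keep track of how the slack $\mu>0$ is consumed when trading $q$ against $1/2-\epsilon$'' identifies exactly the pressure point, but that trade cannot be completed with polynomial moment bounds alone.

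The paper sidesteps this by invoking the result of Fischer and Nappo, which exploits the It\^o structure rather than merely two-point moments: conditioning on the quadratic-variation bound $\langle M\rangle_{s,t}\le\xi|t-s|$, the exponential martingale inequality gives Gaussian-type tails for $M(t)-M(s)$, and a chaining argument then yields the modulus $\norm{\omega_Y(h)}_{L^p(\Omega)}\le C\sqrt{h\log(2T/h)}$ for \emph{every} $p\ge 1$ and $\mu>0$. That $\sqrt{\log}$-modulus is strictly stronger than $h^{1/2-\epsilon}$ for any fixed $\epsilon$, and one then trades a factor $h^\epsilon$ against the logarithm (together with a dyadic supremum over scales to turn the family of $L^p$-bounds into a single random constant $C_\epsilon$) to obtain the stated conclusion for all $\epsilon\in(0,1/2)$. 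To repair your proof you would need to replace the BDG-plus-Kolmogorov step with an exponential/subgaussian inequality and a chaining argument of this type, or simply cite the Fischer--Nappo modulus-of-continuity bound as the paper does.
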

\begin{proof}
  Under these conditions,  \citet{fischer_moments_2009} tell us
  there exists $C>0$ depending only on $p, \mu,$ and $K$ such that
  \[
  \norm{\omega_Y(h)}_{L^p(\Omega)}%
  \le C \sqrt{h \log (2T/h)}.
  \]
  Then, for $\epsilon>0$, there exists a $C_\epsilon\in
  L^p(\Omega)$ such that $\omega_Y(h) \le C_\epsilon
  h^{1/2-\epsilon}$ and hence
  \[
  |Y(s)-Y(t)|%
  \le C_\epsilon |s-t|^{1/2-\epsilon},\qquad %
  s,t\in[0,T].
  \]
  The $L^p(\Omega)$ norm of $C_\epsilon$ is bounded uniformly in
  $\epsilon, \mu,$ and $K$, as required.
\end{proof}

The following is a consequence of the well-known Azuma inequality
\citep{MR0221571,MR2807365}.
\begin{lemma}\label[lemma]{azuma}
  Let $Y_i$ be a sequence of scalar random variables with
  $|Y_i|\le \Ybar$ a.s for a constant $\Ybar$.  Suppose that
  $\mean{Y_i | Y_0,\dots,Y_{i-1}}=0$ a.s. for
  $i=1,\dots,n$. Let $S_n=Y_1+\dots+Y_n$. Then, for $\lambda>0$,
  \[
  \prob{|S_n-S_k| \ge \lambda} %
  \le 2 \exp\pp{-\frac{\lambda^2}{2 (n-k) \Ybar^2}}.
  \]
\end{lemma}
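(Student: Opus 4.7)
The lemma is the classical Azuma--Hoeffding inequality applied to the martingale $S_n$ with differences $Y_i$, so the plan is to follow the standard Chernoff/exponential-moment route.

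First I would observe that because $\mean{Y_i\mid Y_0,\dots,Y_{i-1}}=0$ and $|Y_i|\le \Ybar$ almost surely, the sequence $S_n-S_k=\sum_{i=k+1}^n Y_i$ is a martingale (with respect to the filtration generated by the $Y_i$) with uniformly bounded differences. The key ingredient is Hoeffding's lemma in its conditional form: for any $t\in\real$ and any $i$,
\[
\mean{e^{tY_i}\mid Y_0,\dots,Y_{i-1}}
\le \exp\!\pp{\tfrac{1}{2}t^2 \Ybar^2}\qquad \text{a.s.}
\]
This follows from writing $Y_i$ as a convex combination of $\pm\Ybar$ pointwise, applying convexity of $y\mapsto e^{ty}$, and using the conditional mean-zero assumption; the $1/8$ version (with $(2\Ybar)^2/8=\Ybar^2/2$) gives the stated bound.

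Next, I would apply the exponential Markov inequality: for $t>0$,
\[
\prob{S_n-S_k\ge \lambda}
\le e^{-t\lambda}\,\mean{\exp\!\pp{t(S_n-S_k)}}.
\]
Iterating the tower property and the conditional Hoeffding bound gives
\[
\mean{\exp\!\pp{t(S_n-S_k)}}
\le \exp\!\pp{\tfrac{1}{2}t^2(n-k)\Ybar^2}.
\]
Optimising in $t$ by setting $t=\lambda/((n-k)\Ybar^2)$ yields
\[
\prob{S_n-S_k\ge \lambda}
\le \exp\!\pp{-\frac{\lambda^2}{2(n-k)\Ybar^2}}.
\]
Applying the identical argument to $-Y_i$ (which again has conditional mean zero and is bounded by $\Ybar$) yields the matching lower-tail bound, and combining the two tails by the union bound produces the factor $2$ in the stated inequality.

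There is no real obstacle here beyond recording the conditional version of Hoeffding's lemma; the only subtle point is to be careful that the tower property is applied in the correct order, peeling off the innermost variable $Y_n$ first so that the bound $\mean{e^{tY_n}\mid Y_0,\dots,Y_{n-1}}\le e^{t^2\Ybar^2/2}$ can be pulled outside before integrating the remaining $e^{t(Y_{k+1}+\dots+Y_{n-1})}$. As the result is classical, I would simply cite \citep{MR0221571,MR2807365} for the full details.
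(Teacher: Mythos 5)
Your proof is correct; it is the textbook Chernoff/exponential-moment derivation of the Azuma--Hoeffding inequality, applied to the martingale differences $Y_{k+1},\dots,Y_n$ with common bound $\Ybar$ and variance proxy $(n-k)\Ybar^2$. The paper does not actually prove this lemma---it simply states it as ``a consequence of the well-known Azuma inequality'' and cites \citep{MR0221571,MR2807365}, which contain exactly the argument you outline, so your proposal supplies the standard proof that the paper delegates to its references.
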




\end{document}